\newcommand{\subscr}[2]{#1_{\textup{#2}}}
\newcommand{\supscr}[2]{#1^{\textup{#2}}} \newcommand{\setdef}[2]{\{#1
	\; | \; #2\}}
\newcommand{\KM}{Krasnosel’skii–Mann\xspace}
\newcommand\oprocendsymbol{\hbox{$\triangle$}}
\newcommand\oprocend{\relax\ifmmode\else\unskip\hfill\fi\oprocendsymbol}
\DeclareSymbolFont{bbold}{U}{bbold}{m}{n}
\DeclareSymbolFontAlphabet{\mathbbold}{bbold}
\newcommand{\vect}[1]{\mathbbold{#1}}
\newcommand{\real}{\mathbb{R}}
\DeclareMathOperator{\Img}{\mathrm{Img}}
\renewcommand{\top}{\mathsf{T}} 
\newtheorem{theorem}{Theorem}
\newtheorem{proposition}[theorem]{Proposition}
\newtheorem{lemma}[theorem]{Lemma}
{\theoremstyle{definition}
	\newtheorem{definition}[theorem]{Definition}
	\newtheorem{remark}[theorem]{Remark}
	\newtheorem{example}[theorem]{Example}
	
        \newtheorem{assumption}[theorem]{Assumption}
	\newtheorem*{example*}{Example}
}
\newcommand{\suchthat}{\;\ifnum\currentgrouptype=16 \middle\fi|\;}
\newcommand{\scirc}{\raise1pt\hbox{$\,\scriptstyle\circ\,$}}
\title{\LARGE \bf
Resilience of Input Metering in Dynamic Flow Networks
}
\author{Saber Jafarpour$^{*}$ and Samuel Coogan$^{*}$
\thanks{$^{*}$Saber Jafarpour and Samuel Coogan are with the School of Electrical and Computer Engineering, Georgia Institute of Technology,
        {\tt\small \{saber,sam.coogan\}@}gatech.edu}.%
}
\begin{document}

\maketitle
\thispagestyle{empty}
\pagestyle{empty}

\begin{abstract}
In this paper, we study robustness of  input metering
policies in dynamic flow networks in the presence of
transient disturbances and attacks. We consider a
compartmental model for dynamic flow networks with a
First-In-First-Out (FIFO) routing rule as found in, \emph{e.g.}, transportation networks. We model
the effect of the transient disturbance as an abrupt change
to the state of the network and use the notion
of the region of attraction to measure the resilience of
the network to these changes.
For constant and periodic input metering, we introduce the notion of
monotone-invariant points to establish inner-estimates for the regions of attraction of
free-flow equilibrium points and free-flow periodic orbits using monotone systems theory. These results are applicable to, \emph{e.g.}, networks with cycles, which have not been considered in prior literature on dynamic flow networks with FIFO routing.
Finally, we propose two approaches for finding suitable
monotone-invariant points in the flow networks with FIFO rules. 
\end{abstract}

\section{INTRODUCTION}

\paragraph*{Problem statement and motivation}

Dynamic flow networks are a class of dynamical systems that models the flow of a commodity through a network of interconnected components. This modeling paradigm has been successfully used to study a wide range of natural and engineered systems including transportation networks, drinking water and irrigation networks, supply chain networks, and power grids. The dynamics of these networks are described by the rate of change of density of each compartment together with a policy describing routing of the flows among the connected compartments.


One of the important features of dynamic flow networks is the nonlinear behavior of their components. In particular, it is common for the flow throughput of these networks to increase until a critical capacity and then the network enters a congested regime in which the throughput decreases and can drop to zero. This phenomenon, which is usually referred to as congestion, can propagate through the network and cause a cascading failure of components. In order to mitigate the effects of congestion and restore the full utilization of the flow network, different control strategies have been proposed in the literature. Arguably, one of the most widely-applicable and efficient control strategy for congestion mitigation is input metering where the optimal throughput is obtained by restricting the amount of the input flows in certain components of the system.    


In this work, we focus on the effect of transient disturbance and attack on input metering strategies. Several notions have been proposed in the literature to measure the performance of input metering in flow network systems. However, these measures are either only applicable to static networks~\cite{PE-AF-CS:1956} or they ignore the effect of transient disturbances and attacks in the system~\cite{GC:17}. In contrast, in this work, we model the effect of a transient disturbance as a change in initial distribution of commodity in flow networks. In this case, the region of attraction of the dynamical system can be used to measure the resilience of the input metering to disturbances.   

\paragraph*{Literature review}

The use of compartmental models for studying dynamic flow networks has a rich history~\cite{JAJ-CPS:93}. For transportation networks, the cell transmission model has been extensively used to study dynamic behavior of vehicles in roads~\cite{CFD:94,CFD:95,GG-RH-AAK-PV-JK:08}. Monotone system theory and contraction theory are two of the most prominent tools in studying dynamic behaviors of flow networks~\cite{GC:17}. The papers~\cite{GC-KS-DA-MAD-EF:13,GC-KS-DA-MD-EF:13b} study the throughput of dynamic flow networks and propose a robust routing policy to ensure the monotonicity of the closed-loop system. For transportation network,~\cite{EL-GC-KS:14} shows that certain classes of flow networks with non-FIFO rules are monotone and studies their dynamic stability using contraction theory. While dynamic analysis of flow networks with non-FIFO policies has recently gained much attention, for many important classes of flow networks, the FIFO routing rule is considered to be a more realistic modeling assumption. It is known that the FIFO routing rule can lead to flow dynamics that are not cooperative and monotone theory is not applicable on the whole network domain~\cite{SC-MA:15}. Extensions of monotone system theory have been proposed to study stability of flow networks with FIFO routing rules~\cite{SC-MA:16}. Input metering strategies have been proposed in the literature to optimize the throughput of the flow networks as well as to mitigate the effect of congestion. Input metering in traffic networks using model predictive control is considered in~\cite{MS-CR-JL:17}. Ramp metering has also been used in traffic networks as a type of input metering, and we refer to the survey~\cite{MP-AK:02}.


\paragraph*{Contributions}
In this paper, we provide a framework to study performance of the input metering strategies in the flow networks with respect to the transient disturbances and attacks. We consider a class of network flow dynamics where the rate of change of density is determined by the difference between the inflow and outflow and routing of flows is governed by a FIFO rule. The trace of the transient disturbances on flow networks is modeled by an abrupt change in the initial densities and the robustness of the input metering strategy is measured using the regions of attraction. By introducing the notion of a monotone-invariant point, we establish a framework to employ monotone system theory in stability analysis of flow networks with FIFO rules. As the first contribution of this paper, we characterize the existence and local stability of the free-flow equilibrium points and free-flow periodic orbits of the dynamic flow networks with FIFO rules. Regarding the local stability of the free-flow equilibrium points, our framework extends the existing results in the literature to cyclic flow networks. Moreover, for periodic input metering, our result on the existence and local stability of free-flow periodic orbits is novel. As the main contribution of this paper, we use monotone-invariant points to provide inner-estimates on the regions of attraction of equilibrium points and periodic orbits of the flow networks. Our inner-estimates of regions of attraction are (i) sharper than the existing estimates in the literature, (ii) applicable to flow networks with cyclic topology, and (iii) useful for investigating transient stability of periodic orbits. Finally, we provide an analytic method and an iterative approach for finding suitable monotone-invariant points in the dynamic flow networks. 


\section{Notation and Mathematical Preliminary}\label{sec:notation}
For every $x,y\in \real^n$, we write $x\le y$ if $x_i\le y_i$, for every $i\in \{1,\ldots,n\}$. For every $x,y\in \real^n$ such that $x\le y$, we define the box $[x,y]=\setdef{z\in \real^n}{x\le z\le y}$. For a set $S\subseteq \real^n$, the interior and closure of $S$ are denoted by $\mathrm{int}(S)$ and $\mathrm{cl}(S)$, respectively. We denote the $\ell_p$-norm on $\real^n$ by $\|\cdot\|_{p}$. For a given norm $\|\cdot\|$ on $\real^n$, the induced norm $\|\cdot\|_i$ on $\real^{n\times n}$ is defined by $\|A\|_i=\sup_{x\ne 0}\frac{\|Ax\|}{\|x\|}$. For a matrix $A\in \real^{n\times n}$ and a norm $\|\cdot\|$ on $\real^n$, the matrix measure of $A$ with respect to $\|\cdot\|$ is $\mu_{\|\cdot\|}(A) = \lim_{h\to 0^+} \frac{\|I_n+hA\|_i-1}{h}$.
Consider the following dynamical system on $\real^n$
\begin{align}\label{eq:dynamicalsystem}
  \dot{x}=f(t,x).
\end{align}
The flow of~\eqref{eq:dynamicalsystem} at time $t$ starting from $x_0$ at time $t_0$ is denoted by $\phi^f(t,t_0,x_0)$. Given a norm $\|\cdot\|$ on $\real^{n}$, the dynamical system~\eqref{eq:dynamicalsystem} is contracting with respect to $\|\cdot\|$ if, there exists $c>0$ such that for every $x_0,y_0 \in \real^n$ and every $t\ge t_0$, 
\begin{align*}
  \|\phi^f(t,t_0,x_0)- \phi^f(t,t_0,y_0)\| \le e^{-c(t-t_0)} \|x_0-y_0\|.
\end{align*}
and is weakly contracting if, for every $x_0,y_0 \in \real^n$ and every $t\ge t_0$, we have $\|\phi^f(t,t_0,x_0)- \phi^f(t,t_0,y_0)\| \le \|x_0-y_0\|$.

We model a dynamic flow network as a directed graph $G=(V,\mathcal{O})$ where $V$ are the nodes and $\mathcal{O}$ are the directed links connecting the nodes. A set of entry links
$\mathcal{R}$ allows exogenous flow to enter the network, and the set of all links in the network is denoted by $\mathcal{L}=\mathcal{R}\cup\mathcal{O}$. For every link $l\in \mathcal{O}\cup \mathcal{R}$, the head and the tail of the link are denoted by $\sigma(l)$ and $\tau(l)$,
respectively. By convention, we assume that, on a link $l$, the commodity flows from tail $\tau(l)$ to the head $\sigma(l)$ and we have $\tau(l)=\emptyset$ for every $l\in \mathcal{R}$. For every $v\in V$, we
denote the set of input (resp. output) links to node $v$ by
$\supscr{\mathcal{L}}{in}_v$ (resp. $\supscr{\mathcal{L}}{out}_v$). More precisely, 
\begin{align*}
  \supscr{\mathcal{L}}{in}_v &= \setdef{i\in \mathcal{L}}{\sigma(i)=v},\quad \supscr{\mathcal{L}}{out}_v=\setdef{i\in \mathcal{L}}{\tau(i)=v}.
\end{align*}
A node $v\in V$ is a diverging node if $|\supscr{\mathcal{L}}{out}_v|>1$. The set of diverging nodes is denoted by $\supscr{V}{div}$. We assume that
for each node $v\in V$, there exists a set of fixed split
ratios $\{R^v_{i}\}_{i\in\mathcal{L}}$ such that $R^v_{i}>0$
for every $i\in \supscr{\mathcal{L}}{out}_v$ and $\sum_{i\in \supscr{\mathcal{L}}{out}_v}R^v_{i} \le 1$.
The strict inequality above can happen if a fraction of flow at node $v$ is going
out of the network. We define the set of out-nodes by $\supscr{V}{out}= \setdef{v\in V}{\sum_{i\in\supscr{\mathcal{L}}{out}_v}R^v_{i} < 1}$ and the set of in-nodes by $\supscr{V}{in}= \setdef{v\in V}{\mathcal{R}\cap \mathcal{L}^{\mathrm{in}}_v\ne\emptyset}$. For every $i\in \mathcal{L}$, the dynamic flow network satisfies
\begin{align}\label{eq:traffic}
  \dot{x}_i = \supscr{f}{in}_i(x,u) -
  \supscr{f}{out}_i(x) : = F_{i}(x,u),
\end{align}
where $x_i$ is the density of the commodity at link $i$ and $x=(x_1,\ldots,x_{|\mathcal{L}|})^{\top}\in \real^{|\mathcal{L}|}$. The functions $\supscr{f}{in}_i$
and $\supscr{f}{out}_i$ are the inflow and outflow to the link $i\in
\mathcal{L}$, and $u\in \real^{|\mathcal{R}|}$ is the input metering at the entry links. We assume that, every link $i\in \mathcal{L}$ can accommodate a maximum density denoted by $\overline{x}_i$, has a supply function $s_i:[0,\overline{x}_{i}]\to \real_{\ge 0}$, and has
a demand function $d_i: [0,\overline{x}_{i}]\to \real_{\ge 0}$ such that
\begin{enumerate}
\item\label{p1} $d_i$ is strictly increasing, Lipschitz continuous, and piecewise real analytic with $d_i(0)=0$;
\item\label{p2} $s_i$ is strictly decreasing, Lipschitz continuous, and piecewise real analytic with $s_i(\overline{x}_{i})=0$.
\end{enumerate}
Because of properties~\ref{p1} and ~\ref{p2}, for every $i\in
\mathcal{L}$, there exists a critical density $\supscr{x}{crit}_i$ such that
$d_i(\supscr{x}{crit}_i) = s_i(\supscr{x}{crit}_i)$. We also define
the critical flow at link $i\in \mathcal{L}$ by
$\supscr{f}{crit}_i := d_i(\supscr{x}{crit}_i) =
s_i(\supscr{x}{crit}_i)$. We assume that the flows in the network are routed based on a FIFO rule. Following~\cite{AAK-PV:10}, we adopt the following FIFO rule at each node $v\in V$:
\begin{align}\label{eq:FIFO-traffic}
  \supscr{f}{out}_i(x) &= \alpha^{\sigma(i)}(x)d_i(x_i),\nonumber\\
  \alpha^{v}(x) &= \min_{i\in \supscr{\mathcal{L}}{out}_v}\left\{1,
  \frac{s_i(x_i)}{R^{v}_{i}\sum_{j\in
                  \supscr{\mathcal{L}}{in}_v}d_j(x_j)}\right\}, \nonumber\\
                  \supscr{f}{in}_i(x,u) &= \begin{cases}
                    \min\{u_i, s_i(x_i)\}, & i\in \mathcal{R}\\
                    R^{\tau(i)}_{i} \sum_{j\in \supscr{\mathcal{L}}{in}_{\tau(i)}}
                    \supscr{f}{out}_j(x_j), & i\in \mathcal{O}. 
                    \end{cases}
\end{align}

\section{Monotone-flow domains and monotone-invariant points}\label{sec:mon}

In this section, we investigate the applicability of monotone system theory in flow networks with FIFO rules. We introduce the monotone-flow domain as an extension of the free-flow domain and show that the flow network~\eqref{eq:traffic} and~\eqref{eq:FIFO-traffic} is a monotone dynamical system on this domain. 

\begin{definition}[Free-flow and monotone-flow domains]\label{def:monotoneflow}
  Consider the dynamic flow system~\eqref{eq:traffic} and~\eqref{eq:FIFO-traffic}. For the density vector $x\in \real^{|\mathcal{L}|}$ and link $i\in \mathcal{L}$, we say that $i$ is 
  \begin{enumerate}
      \item in \emph{free-flow} if $\supscr{f}{out}_i(x)=d_i(x_i)$;
      \item in \emph{congestion} if $\supscr{f}{out}_i(x)< d_i(x_i)$.
  \end{enumerate}
  Additionally, 
  \begin{enumerate}\setcounter{enumi}{2}
      \item the \emph{free-flow domain} $\mathcal{F}\subseteq \real^{|\mathcal{L}|}$ is defined by 
      \begin{align*}
  \mathcal{F} =\{x\in
  \real^{|\mathcal{L}|}_{\ge
  0}  \mid  \supscr{f}{out}_i(x) =d_i(x_i), \mbox{ for }i\in \mathcal{L}\}. 
\end{align*}
      \item the \emph{monotone-flow domain} $\mathcal{M}\subseteq \real^{|\mathcal{L}|}$ is defined by 
      \begin{align*}
  \mathcal{M} =\{x\in
  \real^{|\mathcal{L}|}_{\ge
  0}  \mid & \supscr{f}{out}_i(x) =d_i(x_i), \nonumber\\ & \mbox{ for }i\in \supscr{\mathcal{L}}{out}_v \mbox{ with } v\in \supscr{V}{div}\}. 
\end{align*}
  \end{enumerate}
  \end{definition}
Intuitively, $\mathcal{F}$ is the set of density vectors for which every link is in free-flow and $\mathcal{M}$ is the set of density vectors for which the out-links of diverging nodes are in free-flow. Therefore, it is easy to see that $\mathcal{F}\subseteq \mathcal{M}$. It is important to note that the free flow domain $\mathcal{F}$ and the monotone-flow domain $\mathcal{M}$ are independent of the input signal $u$. The following proposition studies properties of the domain $\mathcal{M}$. We refer to Appendix~\ref{app:monotone-flow} for the proof.

\begin{proposition}[Properties of the monotone-flow domain]\label{thm:free-flow-property}
  Consider the dynamic flow network~\eqref{eq:traffic} and~\eqref{eq:FIFO-traffic}. The following statements hold:
  \begin{enumerate}

  \item\label{p1:cone} for every $x\in\mathcal{M}$, we have $[\vect{0}_{|\mathcal{L}|},x]\subseteq \mathcal{M}$. 
   \item\label{p2:weak-contraction} the dynamical system is monotone and weakly contracting on $\mathcal{M}$
  with respect to $\ell_1$-norm.
    \end{enumerate}
\end{proposition}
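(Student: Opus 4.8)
The plan is to prove the two statements separately, relying on the structure of the FIFO dynamics \eqref{eq:FIFO-traffic} restricted to the monotone-flow domain $\mathcal{M}$.

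For statement~\ref{p1:cone}, I would argue that on $\mathcal{M}$ the defining condition is simply that for each diverging node $v\in\supscr{V}{div}$ and each out-link $i\in\supscr{\mathcal{L}}{out}_v$ one has $\alpha^{\sigma(i)}(x)d_i(x_i) = d_i(x_i)$, which (since $d_i$ is strictly increasing with $d_i(0)=0$) is equivalent to either $x_i=0$ or $\alpha^v(x)=1$, i.e.\ $s_i(x_i)\ge R^v_i\sum_{j\in\supscr{\mathcal{L}}{in}_v}d_j(x_j)$ for every $i\in\supscr{\mathcal{L}}{out}_v$. Now take $y\in[\vect{0}_{|\mathcal{L}|},x]$, so $0\le y_j\le x_j$ for all $j$. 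For each such inequality, the left-hand side $s_i(y_i)\ge s_i(x_i)$ since $s_i$ is decreasing, while the right-hand side $R^v_i\sum_{j\in\supscr{\mathcal{L}}{in}_v}d_j(y_j)\le R^v_i\sum_{j\in\supscr{\mathcal{L}}{in}_v}d_j(x_j)$ since each $d_j$ is increasing; hence the inequality is preserved and $y\in\mathcal{M}$. This shows $\mathcal{M}$ is a downward-closed set, which is exactly the cone-type property claimed.

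For statement~\ref{p2:weak-contraction}, the key observation is that on $\mathcal{M}$ all out-links of diverging nodes are in free-flow, so the only place the FIFO coupling $\alpha^v$ can be nontrivial is at non-diverging nodes, where $|\supscr{\mathcal{L}}{out}_v|=1$ and the single split ratio conservation reduces the throttling term to a standard supply-demand min — i.e.\ on $\mathcal{M}$ the vector field $F_i(x,u)$ becomes, up to bookkeeping, $\supscr{f}{in}_i(x,u) - d_i(x_i)$ with $\supscr{f}{in}_i$ for $i\in\mathcal{O}$ a nonnegative combination of the demands $d_j(x_j)$ of upstream links, and for $i\in\mathcal{R}$ equal to $\min\{u_i,s_i(x_i)\}$. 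Monotonicity (cooperativity) then follows by checking the sign structure of the off-diagonal Jacobian entries: $\partial F_i/\partial x_j \ge 0$ for $j\ne i$ because $\supscr{f}{out}_i$ does not depend on $x_j$ for $j\ne i$ on $\mathcal{M}$, and $\supscr{f}{in}_i$ depends on the upstream $x_j$ only through the increasing demands (and, for entry links, through the decreasing $s_i$ evaluated only at $x_i$ itself, which affects the diagonal, not off-diagonals). Since the functions are only piecewise real analytic, I would phrase this via a Kamke–Müller-type condition or a generalized (Clarke) Jacobian argument rather than assuming everywhere-differentiability.

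For the weak contraction in $\ell_1$, I would bound $\mu_{1}(\jac{F}(x,u))$ (again in a generalized sense on the regions where $F$ is smooth, which cover $\mathcal{M}$ up to a measure-zero set): the $\ell_1$ matrix measure is $\max_j\bigl(\partial F_j/\partial x_j + \sum_{i\ne j}|\partial F_i/\partial x_j|\bigr) = \max_j\bigl(\partial F_j/\partial x_j + \sum_{i\ne j}\partial F_i/\partial x_j\bigr)$ by cooperativity, i.e.\ the maximum column sum of the Jacobian. Column $j$ collects the rate at which the outflow $\supscr{f}{out}_j = d_j(x_j)$ is removed from link $j$ (contributing $-d_j'(x_j)\le 0$ on the diagonal) against the rate at which that same demand flows into the downstream links, which by the split-ratio constraint $\sum_{i\in\supscr{\mathcal{L}}{out}_{\sigma(j)}}R^{\sigma(j)}_i\le 1$ adds back at most $d_j'(x_j)$ in total; for entry links the extra term $\partial(\min\{u_j,s_j(x_j)\})/\partial x_j$ is either $0$ or $s_j'(x_j)\le 0$. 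Hence every column sum is $\le 0$, so $\mu_1(\jac{F})\le 0$ wherever defined, which by the standard matrix-measure differential inequality gives nonexpansiveness of the flow in $\ell_1$ as long as trajectories remain in $\mathcal{M}$.

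The main obstacle I anticipate is the lack of everywhere-differentiability: the min in \eqref{eq:FIFO-traffic} and in $\supscr{f}{in}_i$ for entry links makes $F$ only piecewise real analytic, so "$\mu_1(\jac{F})\le 0$" must be made rigorous — either by working on the open dense set where $F$ is analytic and invoking continuity/density to extend the nonexpansive estimate to all of $\mathcal{M}$, or by using a nonsmooth (one-sided Lipschitz / Kamke) formulation of monotonicity and weak contraction. A secondary technical point is confirming forward invariance of $\mathcal{M}$ along trajectories (so that the estimates, which only hold on $\mathcal{M}$, actually apply for all $t\ge t_0$); this should follow from statement~\ref{p1:cone} together with the inflow/outflow sign structure at the boundary $\{x_i=0\}$ and at the free-flow/congestion switching surfaces of the diverging out-links, but it needs to be stated carefully.
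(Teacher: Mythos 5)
Your argument is essentially the paper's: part~\ref{p1:cone} is proved by observing that membership in $\mathcal{M}$ amounts to the throttling factors at the relevant nodes being equal to one, i.e.\ a family of inequalities of the form $s_i(x_i)\ge R^v_i\sum_{j}d_j(x_j)$, which are preserved when $x$ decreases componentwise because $s_i$ is decreasing and each $d_j$ is increasing; and part~\ref{p2:weak-contraction} follows from cooperativity of the (generalized) Jacobian on $\mathcal{M}$ together with the fact that the $\ell_1$ matrix measure of a Metzler matrix is its maximal column sum, which is nonpositive by flow conservation. The paper handles the nonsmoothness exactly as you anticipate, by carrying out the matrix-measure computation at points of differentiability (``for almost every $x$''), and the forward-invariance question you raise is deliberately not part of this proposition --- it is deferred to the notion of monotone-invariant points.

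One imprecision in your part~\ref{p2:weak-contraction} should be corrected: on $\mathcal{M}$ it is \emph{not} true that $F_i(x,u)=\supscr{f}{in}_i(x,u)-d_i(x_i)$ for every link; that is the defining property of the free-flow domain $\mathcal{F}$, not of $\mathcal{M}$. Membership in $\mathcal{M}$ only forces $\supscr{f}{out}_i(x)=d_i(x_i)$ for out-links of diverging nodes, while at other links the factor $\alpha^{\sigma(i)}(x)$ may be strictly less than one, so $\supscr{f}{out}_i$ does depend on the densities of the other links adjacent to $\sigma(i)$ (with the cooperative sign, so monotonicity survives). Consequently your column-$j$ accounting, which takes the outflow of link $j$ to be exactly $d_j(x_j)$, is only valid on $\mathcal{F}$. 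The robust version --- and what the paper actually computes --- is to sum all components of $F$ first: every internal transfer $\supscr{f}{out}_j$ cancels between the $-\supscr{f}{out}_j$ term in $F_j$ and the $+R^{\sigma(j)}_i\supscr{f}{out}_j$ terms it feeds downstream (using $\sum_{i\in\supscr{\mathcal{L}}{out}_v}R^v_i\le 1$), leaving only the entry terms $\min\{u_i,s_i(x_i)\}$ and the exit deficits; differentiating this aggregate column by column then yields $\mu_1\le 0$ without requiring every link to be in free-flow.
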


Next, we introduce the notion of monotone-invariant points in the monotone-flow domain $\mathcal{M}$. As we see in Section~\ref{sec:eq}, this notion plays a crucial role in our approach for inner-estimation of regions of attraction. 

\begin{definition}[Monotone-invariant points]
  Consider the dynamical flow network~\eqref{eq:traffic} with the
  FIFO rule~\eqref{eq:FIFO-traffic} with an input metering $u:\real_{\ge 0}\to \real^{|\mathcal{R}|}$. A point $y\in \mathcal{M}$ is a \emph{monotone-invariant point} if the trajectory of the system starting at $y$ remains inside $\mathcal{M}$. 
\end{definition}

We define the vector field $H:\real^{|\mathcal{L}|}\times \real^{|\mathcal{R}|}\to \real^{|\mathcal{L}|}$ by $H_i(x,u) = \supscr{h}{in}_i(x,u) - \supscr{h}{out}_i(x)$ where:
\begin{align}\label{eq:H}
  \supscr{h}{out}_i(x) &=\begin{cases}
  d_i(x_i), & i\in \supscr{\mathcal{L}}{out}_v,\; v\in \supscr{V}{div}\\
    \supscr{f}{out}_i(x), & \mbox{otherwise.} 
  \end{cases}\nonumber \\ 
                  \supscr{h}{in}_i(x,u) &= \begin{cases}
                    \min\{u_i, s_i(x_i)\}, & i\in \mathcal{R}\\
                    R^{\tau(i)}_{i} \sum_{j\in \supscr{\mathcal{L}}{in}_{\tau(i)}}
                    \supscr{h}{out}_j(x_j), & i\in \mathcal{O}. 
                    \end{cases}
\end{align}
The next proposition studies properties of the vector field $H$ and shows that $H$ can be considered as a \emph{monotone extension} of the flow network~\eqref{eq:traffic} and~\eqref{eq:FIFO-traffic} outside the monotone-flow domain $\mathcal{M}$. We refer to Appendix~\ref{sec:ext} for the proof. 

\begin{proposition}[Monotone extension]\label{thm:ext}
Consider the dynamical flow network~\eqref{eq:traffic} with the FIFO rule~\eqref{eq:FIFO-traffic} and let $H$ be the vector field defined in~\eqref{eq:H}. The following statements hold:
  \begin{enumerate}
      \item\label{p1:H} $H$ is monotone and weakly contracting with respect to the $\ell_1$-norm on $\real^{|\mathcal{L}|}_{\ge 0}$; 
      \item\label{p2:H} $H(x,u)=F(x,u)$, for every $x\in \mathcal{M}$.
  \end{enumerate}
\end{proposition}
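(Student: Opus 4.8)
The plan is to prove the two statements separately, with statement~\ref{p2:H} serving as a warm-up and a structural anchor for statement~\ref{p1:H}. For~\ref{p2:H}, I would simply unfold the definitions: on $\mathcal{M}$, by definition every out-link $i$ of a diverging node is in free-flow, so $\supscr{f}{out}_i(x)=d_i(x_i)=\supscr{h}{out}_i(x)$; for links that are not out-links of a diverging node, $\supscr{h}{out}_i(x)=\supscr{f}{out}_i(x)$ by construction of~\eqref{eq:H}. Hence $\supscr{h}{out}_j(x_j)=\supscr{f}{out}_j(x)$ for all $j$, and therefore the case-wise definitions of $\supscr{h}{in}_i$ and $\supscr{f}{in}_i$ coincide termwise (the $\mathcal{R}$ case is literally identical, and the $\mathcal{O}$ case agrees because the summands agree). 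Thus $H(x,u)=F(x,u)$ on $\mathcal{M}$.

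For statement~\ref{p1:H}, the key observation is that $H$ has been engineered precisely to remove the congestion-induced coupling that breaks monotonicity of $F$. Concretely: in $F$, the term $\alpha^{\sigma(i)}(x)$ couples the outflow of link $i$ to the densities of \emph{sibling} out-links through the $\min$ with $s_\cdot/(R\sum d)$; this nonmonotone dependence (higher sibling density $\Rightarrow$ more congestion $\Rightarrow$ smaller $\alpha$) is exactly what is suppressed in $H$ by replacing $\supscr{f}{out}_i$ with $d_i(x_i)$ for every out-link of a diverging node. After this surgery, the vector field $H$ decomposes cleanly: each $\supscr{h}{out}_i$ depends only on $x_i$ and is nondecreasing in $x_i$ (since $d_i$ is strictly increasing, and on a non-diverging out-node $\supscr{f}{out}_i(x)=\min\{1,s_i(x_i)/(R\sum_j d_j(x_j))\}\, d_i(x_i)$ — here I need to check the $\min$ carefully; when the node is not diverging, $|\supscr{\mathcal{L}}{out}_v|=1$ so actually $\supscr{f}{out}_i(x)=\min\{d_i(x_i),\,s_i(x_i)/R\}$, which is nondecreasing in $x_i$ alone after noting $s_i$ is decreasing but the combination with $d_i$ and the network structure still yields monotonicity — this is the delicate point). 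Then $\supscr{h}{in}_i$ is either $\min\{u_i,s_i(x_i)\}$ (nonincreasing in $x_i$, but $i\in\mathcal{R}$ contributes only to its own $\dot x_i$) or a nonnegative combination of $\supscr{h}{out}_j(x_j)$ for $j$ ranging over in-links of $\tau(i)$, hence nondecreasing in each such $x_j$. To conclude monotonicity via the Kamke–Müller condition, I would verify the sign structure of the off-diagonal partials of $H$: for $i\neq k$, $\partial H_i/\partial x_k\geq 0$ wherever it exists. The only negative contributions come from the $s_i(x_i)$ terms, but those sit on the diagonal ($\partial H_i/\partial x_i$), so off-diagonal partials are sums of nonnegative terms. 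Since $H$ is only piecewise real analytic (the $\min$'s and the supply/demand functions), I would state the Kamke–Müller argument for locally Lipschitz vector fields, or equivalently verify quasimonotonicity directly on each analytic piece and invoke continuity.

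For weak contraction in $\ell_1$, I would proceed as in the proof of Proposition~\ref{thm:free-flow-property}\ref{p2:weak-contraction}: for a monotone system, weak contraction in $\ell_1$ is equivalent to the logarithmic-norm / column-sum condition $\mu_1(\partial H/\partial x)\leq 0$, which for a Metzler-structured Jacobian (off-diagonals $\geq 0$) reduces to checking that each column sum of the Jacobian is $\leq 0$. Column $k$ collects $\partial H_i/\partial x_k$ over all $i$; the diagonal entry $\partial H_k/\partial x_k$ carries the negative $s'_k$ contribution and the $-d'_k$ (or $-\alpha d'_k$) from $-\supscr{h}{out}_k$, while the off-diagonal entries in that column are the downstream propagation terms $R^{\tau(i)}_i\,(\supscr{h}{out}_k)'$ for the successors $i$ of link $k$. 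The conservation identity $\sum_{i\in\supscr{\mathcal{L}}{out}_v}R^v_i\leq 1$ is exactly what makes the total outgoing propagation no larger than the outflow derivative it came from, so the column sum telescopes to something $\leq 0$ (with strict slack at out-nodes). This ``flow in $\leq$ flow out of the derivative'' bookkeeping is the heart of the $\ell_1$ estimate and mirrors the mass-conservation structure already exploited for $F$ on $\mathcal{M}$.

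\textbf{Main obstacle.} The real work is statement~\ref{p1:H}, and within it the non-smoothness: $H$ is built from $\min$'s of piecewise-real-analytic functions, so the Jacobian exists only almost everywhere, and I must argue monotonicity/weak-contraction through kinks. I expect to handle this by working on each real-analytic region where all the $\min$ selections are locally constant, establishing the Metzler and nonpositive-column-sum properties there, and then passing to the closure using that the flow map is the limit of flows of smooth approximations (or directly via the Kamke–Müller theorem for one-sided Lipschitz vector fields). The second subtlety is confirming that each $\supscr{h}{out}_i$ genuinely depends on $x_i$ alone with the right monotonicity — trivial for diverging-node out-links (it is just $d_i(x_i)$), and for non-diverging nodes it follows because $|\supscr{\mathcal{L}}{out}_v|=1$ collapses the min-congestion factor to a function of $x_i$ only.
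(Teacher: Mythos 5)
Your treatment of part~\ref{p2:H} and of the $\ell_1$ weak-contraction estimate matches the paper: part~\ref{p2:H} is the same unfolding of definitions, and the column-sum bookkeeping (using $\sum_{i\in\supscr{\mathcal{L}}{out}_v}R^v_i\le 1$ together with $\mu_1(D_xH)=\max_j\frac{\partial}{\partial x_j}\sum_iH_i$ for a monotone field) is exactly the computation in the appendix. The high-level idea for monotonicity --- that $H$ is obtained from $F$ by surgically removing the congestion factor at diverging nodes, which is the sole source of non-monotonicity --- is also the paper's (one-sentence) argument.

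However, the way you propose to verify the Kamke--M\"uller condition contains a genuine gap, rooted in a misreading of the FIFO rule~\eqref{eq:FIFO-traffic}. You assert that after the surgery ``each $\supscr{h}{out}_i$ depends only on $x_i$'' and that the only negative partials come from $s_i(x_i)$ terms sitting on the diagonal. Neither is true. For a link $i$ whose outflow is not replaced, $\supscr{h}{out}_i(x)=\supscr{f}{out}_i(x)=\alpha^{\sigma(i)}(x)\,d_i(x_i)$, and $\alpha^{\sigma(i)}$ is evaluated at the \emph{head} node of $i$: it involves the supplies $s_k(x_k)$ of the links $k$ downstream of $i$ and the demands $d_j(x_j)$ of the other links arriving at $\sigma(i)$ --- not $s_i(x_i)$, and not only $x_i$. (Your expression $\min\{d_i(x_i),\,s_i(x_i)/R\}$ for a non-diverging node puts the supply on the wrong link and evaluates the congestion factor at the wrong node.) Consequently the dangerous negative off-diagonals are not where you place them: in $-\supscr{h}{out}_i$ the dependence on downstream supplies and on co-arriving demands actually has the \emph{favorable} sign, whereas the FIFO non-monotonicity lives in the inflow terms $\supscr{h}{in}_l=R^{v}_l\sum_{j\in\supscr{\mathcal{L}}{in}_v}\supscr{h}{out}_j$: when the supply constraint of an out-link $k$ of $v$ is active one has $\sum_j\supscr{f}{out}_j=s_k(x_k)/R^{v}_k$, hence a strictly negative partial with respect to $x_k$ appearing off-diagonally in the equation of every sibling out-link $l\ne k$, which can only happen at a diverging $v$. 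A correct proof must (a) show this is the only bad off-diagonal and that the replacement in~\eqref{eq:H} eliminates it, and (b) check the couplings that survive through $\alpha$ --- in particular that $\alpha^{\sigma(i)}(x)d_i(x_i)$ remains nondecreasing in $x_i$ when the supply constraint is active (it does: it then equals $s_k(x_k)d_i(x_i)/\bigl(R^{\sigma(i)}_k\sum_jd_j(x_j)\bigr)$, whose $x_i$-derivative is nonnegative), and that the co-arriving demand couplings cancel in the sum $\sum_j\supscr{f}{out}_j$. You flag (b) as ``the delicate point'' but attempt to resolve it with an incorrect formula, so as written the sign verification would not go through.
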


\section{Regions of attraction for equilibrium points and periodic orbits of flow networks}\label{sec:eq}

Now, we use the notion of monotone-invariant points to present a framework for estimating regions of attraction of dynamic flow networks~\eqref{eq:traffic} and~\eqref{eq:FIFO-traffic} with constant and periodic input metering. For flow networks with constant input metering, we first obtain a closed-form expression for the non-congested equilibrium points of the system. We define
$R_{\mathcal{O}}\in \real^{|\mathcal{O}|\times |\mathcal{O}|}$ and $R_{\mathcal{R}}\in \real^{|\mathcal{O}|\times |\mathcal{R}|}$ as follow:
\begin{align*}
  [R_{\mathcal{O}}]_{kl} &=
  \begin{cases}
    R^{\sigma(l)}_k, & l,k\in\mathcal{O}\\
    0 & \mbox{otherwise.} 
  \end{cases}\\ 
[R_{\mathcal{R}}]_{kl} &=
  \begin{cases}
    R^{\sigma(l)}_k, & k\in\mathcal{O}, l\in \mathcal{R}\\
    0 & \mbox{otherwise.}
    \end{cases}
\end{align*}
In the rest of this paper, we assume that the flow networks satisfy the following assumption.
\begin{assumption}\label{asu:invertible}
In the flow network $G=(V,\mathcal{O})$, for every $v\in V$, 
\begin{enumerate}
    \item there exists a directed path from $v$ to a node $w\in V^{\mathrm{out}}$,
    \item there exists a directed path from a node $w\in V^{\mathrm{in}}$ to $v$.
\end{enumerate}
\end{assumption}
Assumption~\ref{asu:invertible} requires that, for every node in the network, there is a directed path to output flows and there is a directed path to an input flow. Note that $I-R_{\mathcal{O}}$ is a compartmental matrix. Thus, using Assumption~\ref{asu:invertible}, we can conclude that the matrix $I-R_{\mathcal{O}}$ is Metzler, Hurwitz, and invertible~\cite[Corollary 4.11]{FB:21}. Under Assumption~\ref{asu:invertible}, one can define $P = (I-R_{\mathcal{O}})^{-1}R_{\mathcal{R}}$ and show that $P\ge 0$~\cite[Theorem 9.5]{FB:21}. For a given input
$u\in \real^{|\mathcal{R}|}_{\ge 0}$, we define
\begin{align}\label{eq:flow-formula}
  f^e_i(u) = \begin{cases}
    u_i, & i\in \mathcal{R}, \\
    [Pu]_i, & i\in \mathcal{O}.
  \end{cases}
\end{align}
We also define the density vector $x^e(u)\in \real^{|\mathcal{L}|}$ as follows:
\begin{align}\label{eq:eqpt-formula}
  x^e_{i}(u) = d_i^{-1}(f^e_i(u)), \qquad\mbox{for every }i\in
               \mathcal{L}. 
\end{align} 

Next, we introduce two suitable classes of input metering. 

\begin{definition}[Feasible and strictly feasible input metering]
For the dynamical system~\eqref{eq:traffic} with the FIFO rule~\eqref{eq:FIFO-traffic},
\begin{enumerate}
    \item the set of \emph{feasible inputs}
$\mathcal{U}\subseteq \real^{|\mathcal{R}|}_{\ge 0}$ is:
\begin{align*}
  \mathcal{U} = \setdef{u\in \real^{|\mathcal{R}|}_{\ge 0}}{f^e_i(u)
  \le \supscr{f}{crit}_i, \mbox{ for all } i\in \mathcal{L}}.
\end{align*}
    \item the set of \emph{strictly feasible inputs} $\mathcal{U}_{\mathrm{str}}\subseteq \real^{|\mathcal{R}|}_{\ge 0}$ is:
\begin{align*}
  \mathcal{U}_{\mathrm{str}} = \setdef{u\in \real^{|\mathcal{R}|}_{\ge 0}}{f^e_i(u)
  <\supscr{f}{crit}_i, \mbox{ for all } i\in \mathcal{L}}.
\end{align*}
\end{enumerate}
\end{definition}
We also introduce the following assumption which requires the feasible input signal $u:\real_{\ge 0}\to \mathcal{U}$ to be upper-bounded by some $\overline{u}\in \mathcal{U}$.
\begin{assumption}\label{assum:good}
For the feasible input metering $u:\real_{\ge 0}\to \mathcal{U}$, there exists $\overline{u}\in \mathcal{U}$ such that $u(t)\le \overline{u}$, for every $t\in \real_{\ge 0}$.
\end{assumption}

Now, we can state the main results of this paper which provides inner-estimates for regions of attraction of free-flow equilibrium points of the flow networks with FIFO rules. 

\begin{theorem}[Regions of attractions for free-flow equilibrium points]\label{thm:equilibrium}
Consider the flow network~\eqref{eq:traffic} with the FIFO rule~\eqref{eq:FIFO-traffic} and a constant input metering $u\in \mathcal{U}$. Then, for $x^e(u)$ defined in~\eqref{eq:eqpt-formula} and for every monotone-invariant point $y\in \mathcal{M}$, the following statements hold: 
\begin{enumerate}
\item\label{p1:cyclic} $x^e(u)$ is the unique equilibrium point in $\mathcal{F}$; 
  \item\label{p3:converge} every trajectory starting in $[\vect{0}_{|\mathcal{L}|},y]$ converges to an equilibrium point $x^*\in \mathcal{M}$ with $x^*\ge x^e(u)$.
\end{enumerate}
Additionally, if $u\in\subscr{\mathcal{U}}{str}$, then
\begin{enumerate}\setcounter{enumi}{2}
\item\label{p2:cyclic} $x^e(u)$ is locally asymptotically stable and is the unique equilibrium point of the system in $\mathcal{M}$;
\item\label{p4:free-flow} every trajectory starting in $[\vect{0}_{|\mathcal{L}|},y]$ converges to $x^e(u)$.
\end{enumerate}
\end{theorem}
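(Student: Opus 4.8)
The plan is to prove all four statements by transferring the dynamics onto order intervals where they coincide with the monotone extension $H$ of Proposition~\ref{thm:ext}, and then playing monotonicity off against the explicit point $x^e(u)$. For statement~(i), I would first check directly that $x^e(u)$ is an equilibrium lying in $\mathcal{F}$: feasibility $u\in\mathcal{U}$ gives $f^e_i(u)=d_i(x^e_i(u))\le\supscr{f}{crit}_i$, hence $x^e_i(u)\le\supscr{x}{crit}_i$ and $s_i(x^e_i(u))\ge\supscr{f}{crit}_i\ge f^e_i(u)$ for every $i$; applied to the out-links $k$ of each node $v$, together with $R^v_k\sum_{j\in\supscr{\mathcal{L}}{in}_v}d_j(x^e_j(u))=R^v_k\sum_j f^e_j(u)=f^e_k(u)$, this yields $\alpha^v(x^e(u))=1$ at every $v$, so $x^e(u)\in\mathcal{F}$; and $(I-R_{\mathcal{O}})P=R_{\mathcal{R}}$ gives $f^e_i(u)=R^{\tau(i)}_i\sum_{j\in\supscr{\mathcal{L}}{in}_{\tau(i)}}f^e_j(u)$ for $i\in\mathcal{O}$, while $\min\{u_i,s_i(x^e_i(u))\}=u_i=d_i(x^e_i(u))$ for $i\in\mathcal{R}$, i.e.\ $F(x^e(u),u)=0$. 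For uniqueness, any equilibrium $x^\ast\in\mathcal{F}$ has link flows $\phi_i:=d_i(x^\ast_i)$ obeying $\phi_{\mathcal{O}}=R_{\mathcal{O}}\phi_{\mathcal{O}}+R_{\mathcal{R}}\phi_{\mathcal{R}}$ and $\phi_i=\min\{u_i,s_i(x^\ast_i)\}$ on $\mathcal{R}$; if $\phi_i<u_i$ then $s_i(x^\ast_i)=d_i(x^\ast_i)$ forces $x^\ast_i=\supscr{x}{crit}_i$ and $\phi_i=\supscr{f}{crit}_i\ge u_i$, a contradiction, so $\phi_{\mathcal{R}}=u$, whence $\phi=f^e(u)$ and $x^\ast=x^e(u)$ by invertibility of the $d_i$.

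The crux of statements~(ii) and~(iv) is the auxiliary inclusion $[\vect{0}_{|\mathcal{L}|},x^e(u)]\subseteq\mathcal{F}$: for $\vect{0}_{|\mathcal{L}|}\le x\le x^e(u)$, monotonicity of each $d_j$ (increasing) and $s_k$ (decreasing) preserves $s_k(x_k)\ge s_k(x^e_k(u))\ge R^v_k\sum_{j\in\supscr{\mathcal{L}}{in}_v}d_j(x^e_j(u))\ge R^v_k\sum_{j\in\supscr{\mathcal{L}}{in}_v}d_j(x_j)$, so $\alpha^v(x)=1$ at every $v$. Now fix a monotone-invariant $y\in\mathcal{M}$ and $x_0\in[\vect{0}_{|\mathcal{L}|},y]$. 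Monotonicity of $H$ (Proposition~\ref{thm:ext}\ref{p1:H}) gives $\vect{0}_{|\mathcal{L}|}\le\phi^H(t,0,\vect{0}_{|\mathcal{L}|})\le\phi^H(t,0,x_0)\le\phi^H(t,0,y)$; since $y$ is monotone-invariant its $F$-trajectory stays in $\mathcal{M}$, where $H=F$ (Proposition~\ref{thm:ext}\ref{p2:H}), so $\phi^H(t,0,y)=\phi^F(t,0,y)\in\mathcal{M}$, and by Proposition~\ref{thm:free-flow-property}\ref{p1:cone} the whole interval below $\phi^H(t,0,y)$ lies in $\mathcal{M}$; hence $\phi^H(t,0,x_0)\in\mathcal{M}$ and $\phi^F(t,0,x_0)=\phi^H(t,0,x_0)$ for all $t\ge0$. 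Moreover $H(\vect{0}_{|\mathcal{L}|},u)\ge\vect{0}_{|\mathcal{L}|}$ (every $\supscr{h}{out}_i$ vanishes at the origin while every $\supscr{h}{in}_i$ is nonnegative), so $t\mapsto\phi^H(t,0,\vect{0}_{|\mathcal{L}|})$ is nondecreasing and, being bounded above by $x^e(u)$, increases to a limit lying in the closed set $[\vect{0}_{|\mathcal{L}|},x^e(u)]\subseteq\mathcal{F}$, which is an equilibrium of $F$ in $\mathcal{F}$ and so equals $x^e(u)$ by~(i). Consequently $\phi^F(t,0,x_0)=\phi^H(t,0,x_0)\ge\phi^H(t,0,\vect{0}_{|\mathcal{L}|})\to x^e(u)$; this orbit is bounded (it lies below $\phi^F(t,0,y)$, hence in the compact box $\prod_i[0,\overline{x}_i]$), so its $\omega$-limit set is nonempty, lies in $\mathcal{M}$ (a closed set containing the orbit), and consists of points dominating $x^e(u)$. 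Upgrading this to genuine convergence of the orbit to a single equilibrium $x^\ast\ge x^e(u)$ is, I expect, the main obstacle: it requires a convergence theorem for bounded orbits of monotone, weakly $\ell_1$-contracting systems (Proposition~\ref{thm:ext}\ref{p1:H}), which one would establish either via a strictly contractive weighted $\ell_1$-norm available from Assumption~\ref{asu:invertible}, or by analyzing the flow on the $\omega$-limit set.

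For statement~(iii), when $u\in\subscr{\mathcal{U}}{str}$ all the inequalities above are strict, so $x^e(u)$ lies in $\operatorname{int}(\mathcal{F})$; there the dynamics reduce to $\dot x_i=u_i-d_i(x_i)$ on $\mathcal{R}$ and $\dot x_i=R^{\tau(i)}_i\sum_{j\in\supscr{\mathcal{L}}{in}_{\tau(i)}}d_j(x_j)-d_i(x_i)$ on $\mathcal{O}$, whose Jacobian at $x^e(u)$ is block lower triangular (ordering $\mathcal{R},\mathcal{O}$) with diagonal blocks $-\diag(d'_i(x^e_i(u)))_{i\in\mathcal{R}}$ and $(R_{\mathcal{O}}-I)\diag(d'_i(x^e_i(u)))_{i\in\mathcal{O}}$, both Hurwitz because $R_{\mathcal{O}}-I$ is Hurwitz and Metzler under Assumption~\ref{asu:invertible} and right multiplication by a positive diagonal matrix preserves this; hence $x^e(u)$ is locally asymptotically stable. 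For uniqueness of the equilibrium in $\mathcal{M}$, any equilibrium $x^{\ast\ast}\in\mathcal{M}$ satisfies $x^{\ast\ast}\ge x^e(u)$ by comparison with $\phi^H(t,0,\vect{0}_{|\mathcal{L}|})\to x^e(u)$, its link flows $\phi_i=\supscr{h}{out}_i(x^{\ast\ast})$ obey $\phi_{\mathcal{R}}\le u$ and $\phi_{\mathcal{O}}=P\phi_{\mathcal{R}}$ hence $\phi\le f^e(u)$, and since strict feasibility leaves the slack $f^e_i(u)<\supscr{f}{crit}_i$ at every link one argues by backward propagation through the network that no node can have $\alpha^v(x^{\ast\ast})<1$, so $x^{\ast\ast}\in\mathcal{F}$ and $x^{\ast\ast}=x^e(u)$ by~(i) --- this no-congestion step being the second delicate point. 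Statement~(iv) is then immediate: by~(ii) every orbit from $[\vect{0}_{|\mathcal{L}|},y]$ converges to an equilibrium in $\mathcal{M}$, which by~(iii) must be $x^e(u)$.
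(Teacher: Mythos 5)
Your treatment of part~\ref{p1:cyclic} is correct and in fact more complete than the paper's (you actually prove uniqueness in $\mathcal{F}$, including the $\min\{u_i,s_i\}$ case on entry links, which the paper leaves implicit), and your reduction of parts~\ref{p3:converge} and~\ref{p4:free-flow} to the monotone extension $H$ on $[\vect{0}_{|\mathcal{L}|},y]$ is exactly the paper's mechanism. Your increasing-orbit-from-the-origin argument ($H(\vect{0},u)\ge\vect{0}$, so $\phi^H(t,0,\vect{0})$ increases to the unique equilibrium of $\mathcal{F}$ and minorizes every other orbit) is a clean alternative to the paper's route for $x^*\ge x^e(u)$, which instead argues by contradiction that $[\vect{0}_{|\mathcal{L}|},x^e(u)]\cap[\vect{0}_{|\mathcal{L}|},x^*]$ would be a compact convex invariant set without equilibria. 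However, the proposal leaves open precisely the steps that carry the theorem. First, the convergence of every orbit in $[\vect{0}_{|\mathcal{L}|},y]$ to \emph{some} equilibrium --- which you correctly flag as the main obstacle --- is not something you can get from a strictly contractive weighted $\ell_1$-norm: the vector field is genuinely only \emph{weakly} contracting on $\mathcal{M}$ (columns of the Jacobian associated with internal links sum to zero by conservation), so no reweighting yields a negative matrix measure globally. The paper closes this by invoking a specific imported result (\cite[Theorem 21]{SJ-PCV-FB:19q}) stating that trajectories of \emph{piecewise real analytic}, weakly $\ell_1$-contracting systems converge to equilibria; the piecewise real analyticity hypothesis on $s_i,d_i$ is what makes this work, and without citing or reproving such a theorem part~\ref{p3:converge} is unproven.

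Second, your proof of part~\ref{p2:cyclic} has two problems. The linearization argument presumes $F$ is differentiable at $x^e(u)$ with $d_i'(x^e_i(u))>0$; the standing assumptions only give Lipschitz, piecewise real analytic demand functions, which may have kinks (cf.\ $d_i(x_i)=\min\{15,x_i\}$ in the examples) or a vanishing one-sided derivative exactly at $x^e_i(u)$, so Hurwitzness of a Jacobian is not available in general. The paper instead picks $\overline{u}\in\subscr{\mathcal{U}}{str}$ with $u<\overline{u}$, uses Lemma~\ref{lem:monotone-eqpt} to place $x^e(u)$ in the interior of the invariant box $[\vect{0}_{|\mathcal{L}|},x^e(\overline{u})]\subseteq\mathcal{F}$, and gets local asymptotic stability from the same convergence theorem plus uniqueness in $\mathcal{F}$ --- no differentiability needed. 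Likewise, your ``backward propagation shows no node can be congested at an equilibrium in $\mathcal{M}$'' is asserted, not proved, and it is the genuinely delicate combinatorial step (Example~\ref{ex:1} shows congested equilibria in $\mathcal{M}$ do occur at the boundary of feasibility, so any static argument must use strictness everywhere). The paper sidesteps it dynamically: given another equilibrium $x^*\in\mathcal{M}$, $x^*\ge x^e(u)$ and $[\vect{0}_{|\mathcal{L}|},x^*]$ is invariant, and global attractivity of the locally asymptotically stable $x^e(u)$ on that compact invariant set (for weakly contracting systems, \cite[Theorem 19]{SJ-PCV-FB:19q}) forces the constant orbit at $x^*$ to converge to $x^e(u)$, i.e.\ $x^*=x^e(u)$. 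You would need to either adopt these dynamic arguments or supply complete proofs of the two facts you flagged.
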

\begin{proof}
 Regarding part~\ref{p1:cyclic}, we need to show that, for every $i\in \mathcal{L}$, we have 
 \begin{align}\label{eq:eqpt-equality}
\supscr{f}{in}_i(x^e(u),u) = \supscr{f}{out}_i(x^e(u))      
 \end{align}
 Note that by definition of the feasible set $\mathcal{U}$, we have $x^e_i(u) = d_i^{-1}(f^e_i(u))\le \supscr{x}{crit}$, for every $i\in \mathcal{L}$. This implies that $f^e_i(u)\le s_i(x^e_i(u))$, for every $i\in \mathcal{L}$. We first show that $\alpha^v(x^e(u))=1$, for every $v\in V$. We compute
 \begin{align*}
     R^v_i\sum_{j\in \mathcal{L}^{\mathrm{in}}_v} d_j(x^e_j(u)) &=  R^v_i\sum_{j\in \mathcal{L}^{\mathrm{in}}_v} f_j^e(u)= f^e_i(u) \le s_i(x^e_i(u)),
 \end{align*}
 where the second inequality hold by definition of $f^e(u)$ in~\eqref{eq:flow-formula}. This implies that $\alpha^v(x^e(u))=1$. Using this observation, we have $\supscr{f}{out}_i(x^e(u)) = d_i(x^e_i(u)) = f^e_i(u)$, for every $i\in \mathcal{L}$. Moreover, for every $i\in \mathcal{O}$, we get $\supscr{f}{in}_i(x^e(u)) = R^{\tau(i)}_i \sum_{j\in\supscr{\mathcal{L}}{in}_{\tau(i)}} f^e_j(u)$ and, every $i\in \mathcal{R}$, we have  $\supscr{f}{in}_i(x^e(u)) = u_i$. Therefore $x^e(u)\in \mathcal{F}$ and equality~\eqref{eq:eqpt-equality} holds by definition of $f^e(u)$ in~\eqref{eq:flow-formula}.

   Regarding part~\ref{p3:converge}, consider the vector field $H$ defined by~\eqref{eq:H}. By Proposition~\ref{thm:ext}\ref{p2:H}, $H(x,u)=F(x,u)$, for every $x\in \mathcal{M}$. This implies that every trajectory of the system that remains in $\mathcal{M}$ is also a trajectory of $H$. For every $z\in [\vect{0}_{|\mathcal{L}|},y]$, we let $x_z:\real_{\ge 0}\to \real^{|\mathcal{L}|}$ be the trajectory of the system starting at $z$. Since $y$ is a monotone-invariant point, $x_y$ remains in $\mathcal{M}$ and therefore it is a trajectory of the vector field $H$. Moreover, by Proposition~\ref{thm:free-flow-property}, we have $[\vect{0}_{|\mathcal{L}|},y]\subseteq \mathcal{M}$ and the system is monotone on $\mathcal{M}$. This implies that, for every $p\in [\vect{0}_{|\mathcal{L}|},y]$, $x_p(t)\le x_y(t)$, for every $t\in \real_{\ge 0}$. As a result, for every $p\in [\vect{0}_{|\mathcal{L}|},y]$, the trajectory $x_p$ remains inside $\mathcal{M}$ and therefore is a trajectory of the vector field $H$. On the other hand, the vector field $H$ is piecewise real analytic and, by Proposition~\ref{thm:ext}\ref{p1:H}, it is weakly contracting with respect to $\ell_1$-norm on $\real^{|\mathcal{L}|}_{\ge 0}$. Therefore, by~\cite[Theorem 21]{SJ-PCV-FB:19q}, every trajectory of $H$ converges to an equilibrium point of $H$. This means that, for every $p\in [\vect{0}_{|\mathcal{L}|},y]$, the trajectory of the system starting at $p$ converges to an equilibrium point of the system in $\mathcal{M}$. Now, we show that every equilibrium point $x^*\in\mathcal{M}$ of the system satisfies $x^*\ge x^e(u)$. Suppose that this is not true, i.e., there exists an equilibrium point $x^*\in \mathcal{M}$ such that  $x^*\not\ge x^e(u)$. Since $F(x^*,u)=F(x^e(u),u)=\vect{0}_{|\mathcal{L}|}$, by Lemma~\ref{prop:invariant} the boxes $[\vect{0}_{|\mathcal{L}|},x^*]$ and $[\vect{0}_{|\mathcal{L}|},x^e(u)]$ are invariant sets. Therefore, $S=[\vect{0}_{|\mathcal{L}|},x^e(u)]\cap [\vect{0}_{|\mathcal{L}|},x^*]\subseteq \mathcal{F}$ is also an invariant set. Since $x^*\not\ge x^e(u)$ and $x^e(u)$ is the only equilibrium point of the system in $\mathcal{F}$, the set $S$ does not contain any equilibrium point of the system. On the other hand, the dynamical system is weakly contracting with respect to $\ell_1$-norm on the convex compact invariant set $S$. This is a contradiction, since by~\cite[Theorem 19]{SJ-PCV-FB:19q} the system have an equilibrium point in $S$. Therefore, for every equilibrium point $x^*\in \mathcal{M}$, we have $x^*\ge x^e(u)$.    
    
 Regarding part~\ref{p2:cyclic}, by definition of the feasible set $\subscr{\mathcal{U}}{str}$ and  by continuity of the supply and demand functions, there exists $\overline{u}\in \subscr{\mathcal{U}}{str}$ such that $u<\overline{u}$. By Lemma~\ref{lem:monotone-eqpt}, we have $x^e(u)<x^e(\overline{u})$ and therefore $x^e(u) \in \mathrm{int}([\vect{0}_{|\mathcal{L}|},x^e(\overline{u})])$. Moreover,
\begin{align*}
    F(x^e(\overline{u}),u) \le F(x^e(\overline{u}),\overline{u})=\vect{0}_{|\mathcal{L}|}. 
\end{align*}
Therefore, by Lemma~\ref{prop:invariant}, the box $[\vect{0}_{|\mathcal{L}|},x^e(\overline{u})]$ is an invariant set for the system. Additionally, the vector field $F$ is piecewise real analytic and, by Proposition~\ref{thm:free-flow-property}\ref{p2:weak-contraction}, it is weakly contracting with respect to the $\ell_1$-norm on $[\vect{0}_{|\mathcal{L}|},x^e(\overline{u})]$. Therefore, we can use~\cite[Theorem 21]{SJ-PCV-FB:19q}, to show that every trajectory of the system starting in the box  $[\vect{0}_{|\mathcal{L}|},x^e(\overline{u})]$ converges to an equilibrium point in  $[\vect{0}_{|\mathcal{L}|},x^e(\overline{u})]$. By part~\ref{p2:cyclic} $x^e(u)$ is the unique equilibrium point of the system in the box $[\vect{0}_{|\mathcal{L}|},x^e(\overline{u})]\subseteq \mathcal{F}$. Thus, every trajectory of the system starting in $[\vect{0}_{|\mathcal{L}|},x^e(\overline{u})]$ converges to $x^e(u)$. This means that $x^e(u)$ is locally asymptotically stable. 

Now, we prove uniqueness of $x^e(u)$ in $\mathcal{M}$. Suppose that $x^*$ is another equilibrium point of the system in $\mathcal{M}$. By part~\ref{p3:converge}, we have $x^*\ge x^e(u)$. Since $F(x^*,u)=\vect{0}_{|\mathcal{L}|}$, by Lemma~\ref{prop:invariant}, the box $[\vect{0}_{|\mathcal{L}|},x^*]$ is an invariant set. Moreover, $x^e(u)$ is locally asymptotically stable. Therefore, by~\cite[Theorem 19]{SJ-PCV-FB:19q} every trajectory in $[\vect{0}_{|\mathcal{L}|},x^*]$ converges to $x^e(u)$. This implies that $x^*=x^e(u)$. 

Regarding part~\ref{p4:free-flow}, by part~\ref{p2:cyclic}, if $u\in \subscr{\mathcal{U}}{str}$, then $x^e(u)$ is the unique locally asymptotically stable equilibrium point of the system in $\mathcal{M}$. Therefore, by part~\ref{p3:converge}, every trajectory starting in $[\vect{0}_{|\mathcal{L}|},y]$ converges to $x^e(u)$.\end{proof}

\begin{remark}[Comparison with the literature]
The following remarks are in order.
\begin{enumerate}
    \item Theorem~\ref{thm:equilibrium}\ref{p3:converge} and~\ref{p4:free-flow} are novel and extend the existing inner-estimates for region of attraction in the literature (see \cite[Proposition 4]{SC-MA:15}) to domains larger than $[\vect{0}_{|\mathcal{L}|},x^e(u)]$ and to networks with arbitrary (cyclic) topology.
    \item Theorem~\ref{thm:equilibrium}\ref{p2:cyclic} extends the existing results about local asymptotic stability of $x^e(u)$ (see~\cite[Proposition 2]{SC-MA:15}) to networks with cyclic topology. 
    \item For a feasible input $u$ such that $u\not\in \subscr{\mathcal{U}}{str}$, the flow network~\eqref{eq:traffic} and~\eqref{eq:FIFO-traffic} can have an equilibrium point $x^*$ in the monotone-flow domain $\mathcal{M}$ which is not in the free-flow domain $\mathcal{F}$. Example~\ref{ex:1} shows that free-flow and non-free-flow equilibrium points can coexist in the flow networks with FIFO rules.
\end{enumerate}

\end{remark}

Next, we study the region of attraction of free-flow periodic orbits in dynamic flow networks.

\begin{theorem}[Regions of attraction for free-flow periodic orbits]\label{thm:orbit}
Consider the dynamic flow network \eqref{eq:traffic} with the FIFO rule~\eqref{eq:FIFO-traffic} and a periodic input metering $u:\real_{\ge 0}\to \mathcal{U}$ with period $T>0$, i.e., $u(t+T)=u(t)$, for every $t\in \real_{\ge 0}$. Assume that $u$ satisfies Assumption~\ref{assum:good}.
Then, the following statement holds: 
\begin{enumerate}
    \item\label{p1:period} there exists a periodic solution $\rho:\real_{\ge 0}\to \mathcal{F}$ with period $T$, i.e., $\rho(t+T)=\rho(t)$, for every $t\in \real_{\ge 0}$; 
    \end{enumerate}
    Additionally, if Assumption~\ref{assum:good} holds for some $\overline{u}\in \subscr{\mathcal{U}}{str}$, then
    \begin{enumerate}\setcounter{enumi}{1}
    \item\label{p2:acyclic-period} the periodic solution $\rho$ is locally asymptotically stable. 
    \item\label{p2:perodic-converge} for every monotone-invariant point $y\in \mathcal{M}$, any trajectory starting in $[\vect{0}_{|\mathcal{L}|},y]$ converges to the periodic solution $\rho$.
    \end{enumerate}
\end{theorem}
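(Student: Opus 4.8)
The plan is to run the entire argument through the time-$T$ Poincar\'e map $\Pi(z):=\phi^F(T,0,z)$, mirroring the proof of Theorem~\ref{thm:equilibrium} with ``equilibrium'' replaced by ``fixed point of $\Pi$'' (equivalently, ``$T$-periodic orbit''). For part~\ref{p1:period}: let $\overline u\in\mathcal U$ be the bound from Assumption~\ref{assum:good}, recall $x^e(\overline u)\in\mathcal F$ (Theorem~\ref{thm:equilibrium}\ref{p1:cyclic}), and use that $[\vect 0_{|\mathcal L|},w]\subseteq\mathcal F$ whenever $w\in\mathcal F$ (immediate from the FIFO rule, since lowering densities raises supplies and lowers demands, keeping all $\alpha^v=1$), so $B_0:=[\vect 0_{|\mathcal L|},x^e(\overline u)]\subseteq\mathcal F$. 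Since $F$ is nondecreasing in $u$ we have $F(x^e(\overline u),u(t))\le F(x^e(\overline u),\overline u)=\vect 0_{|\mathcal L|}$ for all $t$, and monotonicity of $F$ on $\mathcal F$ (Proposition~\ref{thm:free-flow-property}\ref{p2:weak-contraction}) together with $d_i(0)=0$ makes $B_0$ forward invariant along the periodic metering, as in Lemma~\ref{prop:invariant}. Then $\Pi$ is a continuous self-map of the convex compact set $B_0$, Brouwer's theorem gives a fixed point $\rho_0$, and $T$-periodicity of $u$ yields $\phi^F(t+T,0,\rho_0)=\phi^F(t,0,\Pi(\rho_0))=\phi^F(t,0,\rho_0)$, so $\rho(t):=\phi^F(t,0,\rho_0)$ is $T$-periodic and lies in $B_0\subseteq\mathcal F$.

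For part~\ref{p2:acyclic-period} I would take $\overline u\in\subscr{\mathcal U}{str}$ and, using openness of $\subscr{\mathcal U}{str}$, pick $\overline u'\in\subscr{\mathcal U}{str}$ with $\overline u<\overline u'$. Then $B:=[\vect 0_{|\mathcal L|},x^e(\overline u')]\subseteq\mathcal F$ is forward invariant as above; every point of $B$ satisfies $x_i<\supscr{x}{crit}_i$, so on $B$ no entry flow is clipped and $\dot x_i=u_i-d_i(x_i)$ for $i\in\mathcal R$ and $\dot x_i=R^{\tau(i)}_i\sum_{j\in\supscr{\mathcal L}{in}_{\tau(i)}}d_j(x_j)-d_i(x_i)$ for $i\in\mathcal O$; and $\rho(t)\le x^e(\overline u)<x^e(\overline u')$ by Lemma~\ref{lem:monotone-eqpt}, so $\rho$ lies in the relative interior of $B$ in $\real^{|\mathcal L|}_{\ge0}$. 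Since the flow is monotone on $\mathcal M\supseteq B$, $\Pi|_B$ is monotone, so from $\Pi(\vect 0_{|\mathcal L|})\ge\vect 0_{|\mathcal L|}$ and $\Pi(x^e(\overline u'))\le x^e(\overline u')$ the iterates $\Pi^n(\vect 0_{|\mathcal L|})\uparrow v_*$ and $\Pi^n(x^e(\overline u'))\downarrow w_*$ converge to fixed points with $v_*\le w_*$. Writing $\rho^\pm$ for the periodic orbits through $w_*,v_*$ (both valued in $B$), $\delta:=\rho^+-\rho^-\ge\vect 0_{|\mathcal L|}$ and $\eta_i:=d_i(\rho^+_i)-d_i(\rho^-_i)\ge0$, the dynamics on $B$ give $\dot\delta_i=-\eta_i$ for $i\in\mathcal R$ and $\dot\delta_i=R^{\tau(i)}_i\sum_{j\in\supscr{\mathcal L}{in}_{\tau(i)}}\eta_j-\eta_i$ for $i\in\mathcal O$; integrating over one period and using $\delta(T)=\delta(0)$, the vector $E$ with $E_i=\int_0^T\eta_i\,dt\ge0$ satisfies $E|_{\mathcal R}=\vect 0$ and $(I-R_{\mathcal O})E|_{\mathcal O}=\vect 0$, so invertibility of $I-R_{\mathcal O}$ (Assumption~\ref{asu:invertible}) forces $E=\vect 0$, hence $\eta\equiv\vect 0$ and $\rho^-=\rho^+$, i.e.\ $v_*=w_*$. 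Since $\rho_0\in B$ is a $\Pi$-fixed point with $v_*\le\rho_0\le w_*$, $\rho_0$ is the unique fixed point of $\Pi|_B$, the sandwich $\Pi^n(\vect 0_{|\mathcal L|})\le\Pi^n(z)\le\Pi^n(x^e(\overline u'))$ forces $\Pi^n(z)\to\rho_0$ for all $z\in B$, and via $\phi^F(nT+s,0,z)=\phi^F(s,0,\Pi^n(z))$ and continuity every trajectory from $B$ converges to $\rho$; as $B$ is a relative neighborhood of $\{\rho(t)\}$, this proves local asymptotic stability.

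For part~\ref{p2:perodic-converge}, take a monotone-invariant $y\in\mathcal M$ and $z\in[\vect 0_{|\mathcal L|},y]$. By Proposition~\ref{thm:free-flow-property}\ref{p1:cone} and monotonicity on $\mathcal M$, $\vect 0_{|\mathcal L|}\le\phi^F(t,0,z)\le\phi^F(t,0,y)\in\mathcal M$, so $z$ is itself monotone-invariant, $\phi^F(t,0,z)=\phi^H(t,0,z)$ (Proposition~\ref{thm:ext}\ref{p2:H}), and weak contraction on $\mathcal M$ gives $\|\phi^F(t,0,z)-\rho(t)\|_1\le\|z-\rho_0\|_1$, so this trajectory is bounded. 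Since $H$ is $T$-periodic, piecewise real analytic, and weakly contracting in $\ell_1$ on $\real^{|\mathcal L|}_{\ge0}$, I would invoke the $T$-periodic counterpart of \cite[Theorem 21]{SJ-PCV-FB:19q} to conclude that $\phi^F(t,0,z)$ converges to a $T$-periodic orbit $\rho^*\subseteq\mathcal M$ of $H=F|_{\mathcal M}$. It then remains to show $\rho$ is the only $T$-periodic orbit in $\mathcal M$: for any such $\rho^*$, the set $\{(t,x):\vect 0_{|\mathcal L|}\le x\le\min(\rho^*(t),x^e(\overline u'))\}$ is forward invariant (intersection of the two forward-invariant tubes $\{x\le\rho^*(t)\}$ and $\{x\le x^e(\overline u')\}$), its cross-section is mapped into itself by $\Pi$ (using $\rho^*(T)=\rho^*(0)$), so Brouwer together with part~\ref{p2:acyclic-period} produces a $\Pi$-fixed point equal to $\rho_0$, whence $\rho_0\le\rho^*(0)$ and $\rho(t)\le\rho^*(t)$ for all $t$; then $\rho$ is a locally asymptotically stable periodic orbit lying at the bottom of the forward-invariant tube $\{(t,x):\vect 0_{|\mathcal L|}\le x\le\rho^*(t)\}$, on which the dynamics is weakly contracting, and by the $T$-periodic version of the final argument in the proof of Theorem~\ref{thm:equilibrium}\ref{p2:cyclic} (cf.\ \cite[Theorem 19]{SJ-PCV-FB:19q}) every trajectory in that tube converges to $\rho$, forcing $\rho^*=\rho$. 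Hence $\phi^F(t,0,z)\to\rho$.

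The main obstacle is the convergence input in part~\ref{p2:perodic-converge}: while part~\ref{p2:acyclic-period} is self-contained thanks to the monotone squeeze and the $(I-R_{\mathcal O})$-invertibility identity, the statement that bounded trajectories of the weakly contracting, piecewise-real-analytic, $T$-periodically forced field $H$ converge to $T$-periodic orbits — i.e.\ that no persistent oscillation survives in the $\omega$-limit set — requires a $T$-periodic extension of the autonomous convergence theorems of \cite{SJ-PCV-FB:19q}. Supplying that extension (or extracting it from \cite{SJ-PCV-FB:19q}) and checking its hypotheses for $H$ is the crux; the rest is the same monotonicity/Brouwer bookkeeping as in Theorem~\ref{thm:equilibrium}.
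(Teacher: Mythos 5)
Parts \ref{p1:period} and \ref{p2:acyclic-period} of your argument are sound. Part \ref{p1:period} is essentially the paper's proof (invariance of $[\vect{0}_{|\mathcal{L}|},x^e(\overline{u})]$ plus Brouwer applied to the time-$T$ map). Part \ref{p2:acyclic-period} takes a genuinely different route: the paper establishes attractivity on $[\vect{0}_{|\mathcal{L}|},x^e(u^*)]$ by checking an excitability condition along directed paths to out-nodes and citing \cite[Proposition 2]{EL-GC-KS:14}, whereas you run a self-contained monotone squeeze on the Poincar\'e map between the extremal fixed points $v_*\le w_*$ and then kill the gap by integrating $\dot\delta$ over one period and invoking invertibility of $I-R_{\mathcal{O}}$ to force $\int_0^T\eta\,dt=\vect{0}$. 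That conservation-law identity is a nice replacement for the citation and, combined with strict monotonicity of $d_i$, correctly yields $v_*=w_*$ and hence convergence of $\Pi^n(z)$ for all $z$ in the box; what it buys is independence from \cite{EL-GC-KS:14}, at the cost of some extra bookkeeping.

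The genuine gap is in part \ref{p2:perodic-converge}, and you have correctly located it yourself: your argument needs every bounded trajectory of the $T$-periodically forced, weakly contracting field $H$ to converge to a $T$-periodic orbit, i.e.\ a nonautonomous extension of \cite[Theorem 21]{SJ-PCV-FB:19q}. This is not available off the shelf and is not harmless to assume: for a merely non-expansive ($\ell_1$-weakly contracting) Poincar\'e map, iterates need not converge even when fixed points exist (the $\omega$-limit sets of non-expansive maps in polyhedral norms are in general periodic orbits of period $>1$), so ``no persistent oscillation survives'' is exactly the statement that needs proof. The paper avoids this entirely by a comparison argument that you already have all the ingredients for: since $F(x,u(t))\le F(x,\overline{u})$, \cite[Lemma 3.8.1]{ANM-LH-DL:08-new} gives $x_y(t)\le z_y(t)$ where $z_y$ solves the \emph{autonomous} system $\dot z=F(z,\overline{u})$ from the monotone-invariant point $y$; Theorem~\ref{thm:equilibrium}\ref{p4:free-flow} gives $z_y(t)\to x^e(\overline{u})<x^e(u^*)$, so $x_y(t)$ enters $[\vect{0}_{|\mathcal{L}|},x^e(u^*)]$ in finite time, and then your own part \ref{p2:acyclic-period} finishes the convergence; for $p\in[\vect{0}_{|\mathcal{L}|},y]$, monotonicity on $\mathcal{M}$ gives $\vect{0}_{|\mathcal{L}|}\le x_p(t)\le x_y(t)$, so $x_p$ also enters that box. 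Replacing your third paragraph with this comparison step closes the proof without any new convergence theorem, and also makes your uniqueness-of-periodic-orbit detour unnecessary.
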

\begin{proof}
 First note that since the supply and demand functions are continuous, the set $\mathcal{U}$ is closed in $\real^{|\mathcal{R}|}$. Let $x^e(\overline{u})$ be as defined in~\eqref{eq:eqpt-formula}. By Proposition~\ref{thm:free-flow-property}\ref{p1:cone}, the box $[\vect{0}_{|\mathcal{L}|},x^e(\overline{u})]$ is inside the monotone-invariant domain $\mathcal{M}$ and, by Lemma~\ref{prop:invariant}, it is an invariant set for the dynamical system. Regarding part~\ref{p1:period}, define the map $G_T:[\vect{0}_{|\mathcal{L}|},x^e(\overline{u})]\to \real^{|\mathcal{L}|}$ by 
 \begin{align*}
     G_T(x)=\phi(T,0,x),\qquad\mbox{ for every }x\in [\vect{0}_{|\mathcal{L}|},x^e(\overline{u})],
 \end{align*}
 where $\phi(t,t_0,x_0)$ is the flow of the system $\dot{x}=F(x,u(t))$ at time $t$, starting from $x_0$ at time $t_0$. By continuous dependence of solutions of dynamical systems on their initial conditions~\cite[Chapter V, Theorem 2.1]{PH:02}, the map $G_T$ is continuous. Since the box $[\vect{0}_{|\mathcal{L}|},x^e(\overline{u})]$ is an invariant set, $\Img(G_T) \subseteq [\vect{0}_{|\mathcal{L}|},x^e(\overline{u})]$. Therefore, $G_T$ is a continuous map from the convex compact set $[\vect{0}_{|\mathcal{L}|},x^e(\overline{u})]$ into itself. Therefore, by the Brouwer's fixed-point theorem, $G_T$ has a fixed point $x^*\in [\vect{0}_{|\mathcal{L}|},x^e(\overline{u})]$. This means that $\phi(T,0,x^*) = G_T(x^*) = x^*$ and $t\mapsto \phi(t,0,x^*)$ is a periodic orbit in $\mathcal{F}$. 
 
Regarding parts~\ref{p2:acyclic-period} and~\ref{p2:perodic-converge}, let $u^*\in \subscr{\mathcal{U}}{str}$ be such that $\overline{u}<u^*$. By Lemma~\ref{lem:monotone-eqpt}, we have $x^e(\overline{u})<x^e(u^*)$ and $\rho(t)\in \mathrm{int}\big([\vect{0}_{|\mathcal{L}|},x^e(u^*)]\big)$, for every $t\in \real_{\ge 0}$. Additionally, the box $[\vect{0}_{|\mathcal{L}|},x^e(u^*)]$ is inside $\mathcal{M}$ and is an invariant set for the system. By Proposition~\ref{thm:free-flow-property}\ref{p2:weak-contraction}, the system is monotone and weakly contracting with respect to the $\ell_1$-norm on $[\vect{0}_{|\mathcal{L}|},x^e(u^*)]$.  Moreover, by Assumption~\ref{asu:invertible}, for every $v\in V$, there exists a directed path of links $(i_1,\ldots,i_k)$ from $v$ to some $w\in \supscr{V}{out}$. Since $\rho(t)\in \mathcal{F}$, we have that $\supscr{f}{in}_i(\rho(t)) = R^{\tau(i)}_i\sum_{j\in\supscr{\mathcal{L}}{in}_{\tau(i)}} d_j(\rho(t))$, for every $i\in \mathcal{O}$. Note that $\rho(t)\in \mathrm{int}\big([\vect{0}_{|\mathcal{L}|},x^e(u^*)]\big)$, for every $t\in \real_{\ge 0}$, and $d_i$ is strictly increasing, for every $i\in \mathcal{L}$. This implies that $\frac{\partial}{\partial x_{i_{r}}}(\supscr{f}{in}_{i_{r+1}}(\rho(t)))>0$, for every $r\in \{0,\ldots,k-1\}$. Now, using~\cite[Proposition 2]{EL-GC-KS:14}, every trajectory of the system starting in the box $[\vect{0}_{|\mathcal{L}|},x^e(u^*)]$ converges to the periodic orbit $\rho$. This implies that $\rho$ is locally asymptotically stable. 

Regarding part~\ref{p2:perodic-converge}, consider the dynamical systems:
 \begin{align}
     \dot{x} &= F(x,u(t)),\qquad x\in \real^{|\mathcal{L}|},\label{eq:ds1}\\
     \dot{z} &= F(z,\overline{u}),\quad\qquad z\in \real^{|\mathcal{L}|}\label{eq:ds2}
 \end{align}
 where $F$ is as defined in~\eqref{eq:traffic} and~\eqref{eq:FIFO-traffic}. Let $x_y:\real_{\ge 0}\to \real^{|\mathcal{L}|}$ be the trajectory of the dynamical system~\eqref{eq:ds1} starting at $y$ and $z_y:\real_{\ge 0}\to \real^{|\mathcal{L}|}$ be the trajectory of the dynamical system~\eqref{eq:ds2} starting at $y$. Using definition of the vector field $F$ in~\eqref{eq:traffic} and~\eqref{eq:FIFO-traffic}, 
 \begin{align}\label{eq:ineq}
  F(x,u(t)) \le F(x,\overline{u}), \quad\mbox{ for }x\in \real^{|\mathcal{L}|}, t\in \real_{\ge 0}.  
 \end{align}
  In turn,~\cite[Lemma 3.8.1]{ANM-LH-DL:08-new} and inequality~\eqref{eq:ineq} imply that
 \begin{align}\label{eq:ineq-late}
     x_{y}(t) \le z_y(t)\qquad \mbox{ for every } t\in \real_{\ge 0}.
 \end{align}
 Note that, $y\in \mathcal{M}$ is a monotone-invariant point of the system. Therefore, by Theorem~\ref{thm:equilibrium}\ref{p4:free-flow}, we have $\lim_{t\to\infty} z_y(t)=x^e(\overline{u})<x^e(u^*)$. Moreover, using the inequality~\eqref{eq:ineq-late}, there exists $T\in \real_{\ge 0}$, such that $x_y(t) \in [\vect{0}_{|\mathcal{L}|},x^e(u^*)]$, for every $t\ge T$. Thus, using the proof of part~\ref{p2:acyclic-period}, the trajectory $t\to x_y(t)$ converges to the periodic orbit $\rho$. Note that the system is monotone on $\mathcal{M}$ and $[\vect{0}_{|\mathcal{L}|},y]\subseteq \mathcal{M}$. Therefore, for every $p\in [\vect{0}_{|\mathcal{L}|},y]$, we have $x_p(t)\le x_y(t)$, for every $t\in \real_{\ge 0}$ and thus $t\mapsto x_p(t)$ converges to $\rho$. \end{proof}

\begin{remark}[Comparison with the literature]
Existence and stability of periodic orbits for monotone dynamic flow networks with non-FIFO rules have been studied in~\cite{EL-GC-KS:14}. To our knowledge, Theorem~\ref{thm:orbit} is the first result that studies periodic orbits of the flow network \eqref{eq:traffic} with the FIFO rule~\eqref{eq:FIFO-traffic}.   
\end{remark}

    \section{Regions of attraction via monotone-invariant points}\label{sec:invariantpoint}
      Theorems~\ref{thm:equilibrium} and~\ref{thm:orbit} provide a framework for constructing inner-estimate of the regions of attraction of dynamic flow network. However, computing these inner-estimates relies on finding a suitable monotone-invariant point for the system. In this section, we provide two methods for finding monotone-invariant points of the flow network~\eqref{eq:traffic} and~\eqref{eq:FIFO-traffic}.
     \begin{proposition}[Monotone-invariant points via vector field]\label{prop:invariant2}
      Consider the dynamical flow network~\eqref{eq:traffic} with the
      FIFO rule~\eqref{eq:FIFO-traffic} and a feasible input metering $u:\real_{\ge 0}\to \mathcal{U}$. Suppose that $u$ satisfies Assumption~\ref{assum:good} with an upper-bound $\overline{u}\in \mathcal{U}$. Then, the following statement holds:
    \begin{enumerate}
    \item\label{p1:arbitrary} any point $y\in \mathcal{M}$ satisfying $F(y,\overline{u})\le \vect{0}_{|\mathcal{L}|}$ is  a monotone-invariant point of the system.
    \end{enumerate}
    In particular, we have
    \begin{enumerate}\setcounter{enumi}{1}
      \item\label{p2:freefloweqpt} $x^e(\overline{u})$ is a monotone-invariant point of the system;
      \item\label{p3:generaleqpt} any equilibrium point $x^*$ of the vector field $x\mapsto F(x,\overline{u})$ in the monotone-flow domain $\mathcal{M}$ is a monotone-invariant point of the system.
    \end{enumerate} 
    \end{proposition}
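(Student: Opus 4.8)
The plan is to prove part~\ref{p1:arbitrary} and then read off parts~\ref{p2:freefloweqpt} and~\ref{p3:generaleqpt}. The key device is the monotone extension $H$ of Proposition~\ref{thm:ext}: since $F$ is monotone only on $\mathcal{M}$, one cannot compare trajectories of $F$ directly, so the entire comparison argument is carried out for $H$ on $\real^{|\mathcal{L}|}_{\ge 0}$ (where it is monotone and weakly contracting) and transferred back to $F$ only after confinement to $\mathcal{M}$ has been secured.

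Fix $y \in \mathcal{M}$ with $F(y,\overline{u}) \le \vect{0}_{|\mathcal{L}|}$; let $\tilde{x}_y$ be the trajectory of $\dot{x} = H(x,u(t))$ from $y$ and $z_y$ the trajectory of the autonomous system $\dot{z} = H(z,\overline{u})$ from $y$. I would first show $z_y(t) \le y$ for all $t \ge 0$: because $y \in \mathcal{M}$, Proposition~\ref{thm:ext}\ref{p2:H} gives $H(y,\overline{u}) = F(y,\overline{u}) \le \vect{0}_{|\mathcal{L}|}$, so the constant map $t \mapsto y$ is a supersolution of $\dot{z} = H(z,\overline{u})$, and the comparison principle for cooperative systems (see, e.g., \cite[Lemma 3.8.1]{ANM-LH-DL:08-new}) yields $z_y(t) \le y$. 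Next, on $\{x : x_i = 0\}$ we have $H_i \ge 0$, since $\supscr{h}{out}_i$ vanishes there (because $d_i(0) = 0$) while $\supscr{h}{in}_i \ge 0$; hence $\real^{|\mathcal{L}|}_{\ge 0}$ is invariant for $\dot{x} = H(x,u(t))$ and $\tilde{x}_y(t) \ge \vect{0}_{|\mathcal{L}|}$. Finally, inspection of~\eqref{eq:H} shows that $H$ depends on $u$ only through the inflows $\supscr{h}{in}_i$ at entry links $i \in \mathcal{R}$, nondecreasingly; combined with $u(t) \le \overline{u}$ this gives $H(x,u(t)) \le H(x,\overline{u})$ for all $x \in \real^{|\mathcal{L}|}_{\ge 0}$ and $t \ge 0$, and since $H(\cdot,\overline{u})$ is cooperative on $\real^{|\mathcal{L}|}_{\ge 0}$ (Proposition~\ref{thm:ext}\ref{p1:H}) a second application of the comparison principle (with $\tilde{x}_y$ a subsolution, $z_y$ the solution, and equal initial data) gives $\tilde{x}_y(t) \le z_y(t)$. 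Chaining these bounds, $\vect{0}_{|\mathcal{L}|} \le \tilde{x}_y(t) \le z_y(t) \le y$, so $\tilde{x}_y(t) \in [\vect{0}_{|\mathcal{L}|},y] \subseteq \mathcal{M}$ by Proposition~\ref{thm:free-flow-property}\ref{p1:cone}, for every $t \ge 0$.

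To close part~\ref{p1:arbitrary}, I would invoke the identification step: since $\tilde{x}_y(t) \in \mathcal{M}$ for all $t$, Proposition~\ref{thm:ext}\ref{p2:H} gives $H(\tilde{x}_y(t),u(t)) = F(\tilde{x}_y(t),u(t))$, so $\tilde{x}_y$ solves $\dot{x} = F(x,u(t))$ with $\tilde{x}_y(0) = y$; by uniqueness of solutions, $\tilde{x}_y$ is the trajectory of the flow network~\eqref{eq:traffic}--\eqref{eq:FIFO-traffic} starting at $y$, and it stays in $\mathcal{M}$, i.e., $y$ is a monotone-invariant point. Part~\ref{p2:freefloweqpt} follows at once because the computation in the proof of Theorem~\ref{thm:equilibrium}\ref{p1:cyclic} (applied with the feasible input $\overline{u}$) gives $x^e(\overline{u}) \in \mathcal{F} \subseteq \mathcal{M}$ and $F(x^e(\overline{u}),\overline{u}) = \vect{0}_{|\mathcal{L}|}$, so part~\ref{p1:arbitrary} applies; likewise part~\ref{p3:generaleqpt} follows since any equilibrium $x^*$ of $x \mapsto F(x,\overline{u})$ in $\mathcal{M}$ satisfies $x^* \in \mathcal{M}$ and $F(x^*,\overline{u}) = \vect{0}_{|\mathcal{L}|} \le \vect{0}_{|\mathcal{L}|}$.

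The step I expect to be the main obstacle is precisely the one this architecture is built to handle cleanly: because $F$ fails to be monotone outside $\mathcal{M}$, no comparison inequality may legitimately be applied to an $F$-trajectory before it is known to remain in $\mathcal{M}$, which is why the comparisons are run for $H$ and the return to $F$ is deferred to the end. A secondary, more routine point is checking, from the form of~\eqref{eq:H} and $d_i(0) = 0$, the two structural facts used above — monotone dependence of $H$ on $u$, and invariance of $\real^{|\mathcal{L}|}_{\ge 0}$.
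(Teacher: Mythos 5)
Your proof is correct, and it reaches the same conclusion by the same basic decomposition as the paper: show that the box $[\vect{0}_{|\mathcal{L}|},y]$ is forward invariant via a comparison with the frozen-input system $\dot z = (\cdot)(z,\overline{u})$, and combine this with Proposition~\ref{thm:free-flow-property}\ref{p1:cone} to conclude confinement to $\mathcal{M}$. The paper simply cites Lemma~\ref{prop:invariant} for the invariance step; that lemma runs the comparison argument (via \cite[Proposition 2.1]{HLS:95} and \cite[Lemma 3.8.1]{ANM-LH-DL:08-new}) directly on the vector field $F$, using its monotonicity \emph{on the box} $[\vect{0}_{|\mathcal{L}|},y]\subseteq\mathcal{M}$. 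The one genuine difference in your route is that you run the entire comparison on the monotone extension $H$ of Proposition~\ref{thm:ext}, which is cooperative on all of $\real^{|\mathcal{L}|}_{\ge 0}$, and only identify the $H$-trajectory with the $F$-trajectory after confinement to $\mathcal{M}$ is secured. This buys you something real: the paper's Lemma~\ref{prop:invariant} invokes monotonicity of $F$ on a set that the trajectory is only afterwards shown to remain in, which strictly speaking requires an additional first-exit-time (or continuity) argument to close; your architecture dispenses with that issue entirely, at the cost of having to verify two routine structural facts (invariance of $\real^{|\mathcal{L}|}_{\ge 0}$ under $H$, and monotone dependence of $H$ on $u$), both of which you check correctly. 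Your derivations of parts~\ref{p2:freefloweqpt} and~\ref{p3:generaleqpt} from part~\ref{p1:arbitrary} match the paper's.
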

    \begin{proof}
    Note that $F(y,\overline{u})\le \vect{0}_{|\mathcal{L}|}$. Thus, by Lemma~\ref{prop:invariant}, the box $[\vect{0}_{|\mathcal{L}|},y]$ is an invariant set. Moreover, we have $y\in\mathcal{M}$. Therefore, by Proposition~\ref{thm:free-flow-property}\ref{p1:cone}, the box $[\vect{0}_{|\mathcal{L}|},y]$ is in $\mathcal{M}$. This implies that $y$ is a monotone-invariant point for the system. Parts~\ref{p2:freefloweqpt} and~\ref{p3:generaleqpt} easily follow from part~\ref{p1:arbitrary}.  
    \end{proof}

   \begin{remark}
   \begin{enumerate}
  \item  For a constant input $u\in \mathcal{U}$, one can use Proposition~\ref{prop:invariant2}\ref{p2:freefloweqpt} and Theorem~\ref{thm:equilibrium} to show that $\bigcup_{v\in \mathcal{U}, v\ge u} [\vect{0}_{|\mathcal{L}|},x^e(v)]$ is a region of attraction for the equilibrium point $x^e(u)$ of the flow network~\eqref{eq:traffic} and~\eqref{eq:FIFO-traffic}. This leads to larger inner-estimates of regions of attraction than the one proposed in \cite[Proposition 3]{SC-MA:15}.
  \item Proposition~\ref{prop:invariant2}\ref{p3:generaleqpt} does not require the equilibrium point $x^*$ to be in the free-flow domain $\mathcal{F}$. It only requires that the out-links of the diverging nodes at the equilibrium point $x^*$ are in free-flow.
  \end{enumerate}
    \end{remark}
    
    In the next proposition, we establish an algorithmic approach for finding monotone-invariant points. Our algorithm is based on the \emph{monotone-flow iteration} defined by
   \begin{align}\label{eq:km}
        x^{k+1} &= x^{k} + \alpha H(x^k,\overline{u}),\qquad\mbox{ for all }k\in \mathbb{Z}_{\ge 0},\nonumber\\
        x^0 &= \overline{x}, 
    \end{align}
    where $H$ is the monotone extension vector field in~\eqref{eq:H}. 
    \begin{proposition}[Monotone-invariant points via forward Euler iterations]\label{prop:iterations}
    Consider the dynamical flow network~\eqref{eq:traffic} and~\eqref{eq:FIFO-traffic} with the strictly feasible input metering $u:\real_{\ge 0}\to \subscr{\mathcal{U}}{str}$. Suppose that $u$ satisfies Assumption~\ref{assum:good} with the upper-bound $\overline{u}\in \subscr{\mathcal{U}}{str}$. Then, for small enough $\alpha>0$, and 
        \begin{align*}
            N=\max\{k\in \mathbb{Z}_{\ge 0}\mid x^k\not\in \mathrm{int}(\mathcal{M})\},
            \end{align*}
            the following statements hold:
    \begin{enumerate}
        \item\label{p1:converge-km} the monotone-flow iteration~\eqref{eq:km} converges to $x^e(\overline{u})$;
        \item\label{p1.5} $x^e(\overline{u})\le x^{N+1}$;
        \item\label{p2:eqpt-km} $x^{N+1}$ is a monotone-invariant point for the system.  
    \end{enumerate}
    \end{proposition}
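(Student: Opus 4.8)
The plan is to work throughout with the map $T_\alpha := \mathrm{Id} + \alpha H(\cdot,\overline{u})$. After extending each $d_i,s_i$ constantly beyond $\overline{x}_i$ so that $H(\cdot,\overline{u})$ is globally Lipschitz, one has, for every small enough $\alpha>0$, that $T_\alpha$ is simultaneously (a) order preserving, because $H$ is cooperative (Proposition~\ref{thm:ext}\ref{p1:H}) so the Jacobian $I+\alpha\,\partial_x H$ has nonnegative entries, and (b) nonexpansive in the $\ell_1$-norm, because weak contraction of $H$ gives $\mu_{\|\cdot\|_1}(\partial_x H)\le 0$, so the $\ell_1$-induced norm of $I+\alpha\,\partial_x H$ is at most $1+\alpha\,\mu_{\|\cdot\|_1}(\partial_x H)\le 1$. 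Since $x^{k+1}=T_\alpha(x^k)$, the monotone-flow iteration \eqref{eq:km} is a \KM iteration for $T_\alpha$. Moreover $x^e(\overline{u})\in\mathcal F\subseteq\mathcal M$ and $H(x^e(\overline{u}),\overline{u})=F(x^e(\overline{u}),\overline{u})=\vect{0}_{|\mathcal{L}|}$ by Theorem~\ref{thm:equilibrium}\ref{p1:cyclic} and Proposition~\ref{thm:ext}\ref{p2:H}, so $x^e(\overline{u})$ is a fixed point of $T_\alpha$; strict feasibility of $\overline{u}$ makes every inequality defining $\mathcal M$ strict at $x^e(\overline{u})$, hence $x^e(\overline{u})\in\mathrm{int}(\mathcal M)$.

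\textbf{Parts~\ref{p1:converge-km} and~\ref{p1.5}.} Part~\ref{p1.5} is immediate: $x^0=\overline{x}\ge x^e(\overline{u})$ since $x^e_i(\overline{u})=d_i^{-1}(f^e_i(\overline{u}))\in[0,\overline{x}_i]$, so, $T_\alpha$ being order preserving and fixing $x^e(\overline{u})$, induction gives $x^k\ge x^e(\overline{u})$ for all $k$, in particular $x^{N+1}\ge x^e(\overline{u})$; this also makes $N$ well defined and finite, because $x^k\to x^e(\overline{u})\in\mathrm{int}(\mathcal M)$ puts $x^k\in\mathrm{int}(\mathcal M)$ for large $k$, while $x^0=\overline{x}\notin\mathcal M$ whenever $\supscr{V}{div}\neq\emptyset$ (the remaining case is trivial, as then $\mathcal M=\real^{|\mathcal{L}|}_{\ge 0}$). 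For part~\ref{p1:converge-km} I would first check that $x^e(\overline{u})$ is the \emph{only} equilibrium of $H(\cdot,\overline{u})$: if $x^*\ge\vect{0}_{|\mathcal{L}|}$ and $H(x^*,\overline{u})=\vect{0}_{|\mathcal{L}|}$, then, $H$ being cooperative with $d_i(0)=0$, the box $[\vect{0}_{|\mathcal{L}|},x^*]$ is invariant, so $[\vect{0}_{|\mathcal{L}|},x^*]\cap[\vect{0}_{|\mathcal{L}|},x^e(\overline{u})]\subseteq\mathcal F$ is a convex compact invariant set; the argument used in the proof of Theorem~\ref{thm:equilibrium}\ref{p3:converge} together with \cite[Theorems~19 and~21]{SJ-PCV-FB:19q} forces $x^*\ge x^e(\overline{u})$, and then local asymptotic stability of $x^e(\overline{u})$ (Theorem~\ref{thm:equilibrium}\ref{p2:cyclic}) with weak contraction of $H$ forces $x^*=x^e(\overline{u})$. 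With $\mathrm{Fix}(T_\alpha)=\{x^e(\overline{u})\}$, the \KM convergence theorem gives $x^k\to x^e(\overline{u})$.

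\textbf{Part~\ref{p2:eqpt-km}.} By definition of $N$, $x^{N+1}\in\mathrm{int}(\mathcal M)\subseteq\mathcal M$, so by Proposition~\ref{thm:ext}\ref{p2:H} we have $F(x^{N+1},\overline{u})=H(x^{N+1},\overline{u})=\tfrac{1}{\alpha}\bigl(x^{N+2}-x^{N+1}\bigr)$. Hence, by Proposition~\ref{prop:invariant2}\ref{p1:arbitrary}, it suffices to prove $x^{N+2}\le x^{N+1}$. I would derive this from the fact that the iteration is eventually monotone decreasing: since $T_\alpha$ is order preserving, a single decreasing step $x^{K+1}\le x^K$ propagates to $x^{k+1}\le x^k$ for all $k\ge K$; and since the inequalities defining $\mathcal M$ are downward closed (Proposition~\ref{thm:free-flow-property}\ref{p1:cone}), once the decreasing iterates enter $\mathrm{int}(\mathcal M)$ they never leave it, so the last-exit index satisfies $N+1\ge K$, and therefore $x^{N+2}\le x^{N+1}$. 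Combining, $F(x^{N+1},\overline{u})\le\vect{0}_{|\mathcal{L}|}$ with $x^{N+1}\in\mathcal M$, and Proposition~\ref{prop:invariant2}\ref{p1:arbitrary} concludes that $x^{N+1}$ is a monotone-invariant point.

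\textbf{Main obstacle.} The crux is showing that the iteration is eventually monotone decreasing, i.e.\ that some step satisfies $x^{K+1}\le x^K$; this does not follow from $\ell_1$-nonexpansiveness alone, since \KM iterates of a merely nonexpansive map need not be eventually monotone, so one must exploit the cooperative structure of $H$. The route I would take is spectral: the linearization $A:=\partial_x H(x^e(\overline{u}),\overline{u})$ is Metzler (by cooperativity) and Hurwitz (by Theorem~\ref{thm:equilibrium}\ref{p2:cyclic}), so for small $\alpha$ the matrix $I+\alpha A$ is nonnegative and Schur, with dominant eigenvalue carrying the strictly positive Perron eigenvector $w$. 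Once $x^k$ has entered a sufficiently small neighborhood of $x^e(\overline{u})$ (which happens by part~\ref{p1:converge-km}) and since $x^k\ge x^e(\overline{u})$ (part~\ref{p1.5}), iterating $I+\alpha A$ drives $x^k-x^e(\overline{u})$ into the cone generated by $w$, so the increment $x^{k+1}-x^k\approx\alpha A\,(x^k-x^e(\overline{u}))$ becomes a negative multiple of $w$; controlling the nonlinear remainder over the finitely many steps needed for the Perron mode to emerge — and confirming it does not re-enter $\mathrm{int}(\mathcal M)$ prematurely — is the technical heart of the argument and supplies the index $K$.
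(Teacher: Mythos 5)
Your treatment of parts~\ref{p1:converge-km} and~\ref{p1.5} is sound and in places cleaner than the paper's: the paper likewise casts~\eqref{eq:km} as a \KM iteration of a nonexpansive map (via the operator $z\mapsto z+\sqrt{\alpha}\,H(z,\overline{u})$) and likewise reduces to uniqueness of the fixed point $x^e(\overline{u})$, while your order-theoretic induction $x^k\ge x^e(\overline{u})$ delivers part~\ref{p1.5} directly, something the paper obtains only implicitly through a comparison with the continuous-time flow. Two small points: before invoking the \KM dichotomy you should note that nonexpansiveness together with the existence of the fixed point $x^e(\overline{u})$ makes the iterates Fej\'er monotone, hence bounded; and the Picard iteration of $T_\alpha$ is a \KM iteration only after writing $T_\alpha$ as an average of the identity with the nonexpansive map $\mathrm{Id}+(\alpha/\lambda)H(\cdot,\overline{u})$ for some $\lambda<1$, which is exactly the role of the $\sqrt{\alpha}$ device in the paper.

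Part~\ref{p2:eqpt-km} is where the proposal has a genuine gap, and it is a gap in strategy, not only in execution. You reduce monotone invariance of $x^{N+1}$ to the sign condition $F(x^{N+1},\overline{u})\le\vect{0}_{|\mathcal{L}|}$, i.e.\ to $x^{N+2}\le x^{N+1}$, and this needs two facts you do not establish: (a) that the iteration is eventually decreasing, $x^{K+1}\le x^K$ for some $K$ --- you concede this is unproven, and the Perron sketch you offer requires irreducibility/primitivity of $\partial_x H(x^e(\overline{u}),\overline{u})$ (not guaranteed by the network structure, and since the supply/demand functions are only piecewise analytic this Jacobian need not even be classically defined at $x^e(\overline{u})$) plus control of the nonlinear remainder; and (b) that $N+1\ge K$, which you assert but which can fail: the iterates may enter and remain in $\mathrm{int}(\mathcal{M})$ strictly before they become monotone, in which case $N+1<K$ and nothing forces $x^{N+2}\le x^{N+1}$. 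In fact $F(x^{N+1},\overline{u})\le\vect{0}_{|\mathcal{L}|}$ is strictly stronger than what the proposition asserts, and the paper never proves it. The paper's route sidesteps the issue: it considers the continuous-time flow $x_{\overline{x}}$ of $\dot x=H(x,\overline{u})$, sets $T=\sup\{t: x_{\overline{x}}(t)\notin\mathcal{M}\}$, observes that for $s>T$ the point $x_{\overline{x}}(s)$ is monotone-invariant because its forward $H$-orbit is the tail of $x_{\overline{x}}$ (which stays in $\mathcal{M}$) and the true time-varying trajectory is dominated by that orbit while $\mathcal{M}$ is downward closed, and then transfers this to the Euler iterate $x^{N+1}$ via the uniform discretization bound $\|x_z(k\alpha)-x_z^k\|\le\epsilon/2$. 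If you want a purely discrete argument, you should aim for a discrete analogue of ``the tail of the iteration from $x^{N+1}$ stays in $\mathcal{M}$ and dominates the true trajectory,'' rather than the pointwise condition $F(x^{N+1},\overline{u})\le\vect{0}_{|\mathcal{L}|}$.
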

    \begin{proof}
    Consider the following dynamical systems
    \begin{align}
        \dot{z} &= H(z,u(t)),&& z\in \real^{|\mathcal{L}|}\label{eq:original}\\
        \dot{x} &= H(x,\overline{u}),&& x\in \real^{|\mathcal{L}|}\label{eq:bound}
    \end{align}
     We first note that the monotone-flow iteration~\eqref{eq:km} is the forward Euler discretization of the solution of the dynamical system~\eqref{eq:bound} starting at $\overline{x}$. Let $x_y:\real_{\ge 0}\to \real^{|\mathcal{L}|}$ be the trajectory of system~\eqref{eq:bound} starting at $y$, $z_y:\real_{\ge 0}\to \real^{|\mathcal{L}|}$ be the trajectory of system~\eqref{eq:original} starting at $y$, and $\{x^k_{y}\}_{k=1}^{\infty}$ be the sequence generated by monotone-flow iteration~\eqref{eq:km} starting at $x^0=y$. By~\cite[Theorem 6.3]{KEA:08}, for every $\epsilon>0$, one can choose $\alpha\in \real_{\ge 0}$ small enough such that, for every $z\in [0,\overline{x}]$ and every $k\in \{1.\ldots,N+1\}$, we have
\begin{align}\label{eq:boundgood}
    \|x_z(k\alpha)-x_z^{k}\| \le \frac{\epsilon}{2}.
\end{align}
Regarding part~\ref{p1:converge-km}, we define the map $\mathcal{T}:\real^{|\mathcal{L}|}\to \real^{|\mathcal{L}|}$ by $\mathcal{T}(z) = z + \sqrt{\alpha} H(z,\overline{u})$. We first show that $\mathcal{T}$ is non-expansive. Note that, by~\cite[Lemma 7]{SJ-AD-AVP-FB:21f}, we have
\begin{align*}
   \|D_x\mathcal{T}(x)\|_1 & = \|I_n + \sqrt{\alpha} D_x H(x,\overline{u})\|_1 \\ & = 1 + \sqrt{\alpha} \mu_1(D_x H(x,\overline{u})),\qquad\mbox{ for all } x\in \real^{|\mathcal{L}|}.
\end{align*}
Moreover, by Proposition~\ref{thm:ext}\ref{p1:H}, $H$ is weakly contracting with respect to $\ell_1$-norm, i.e., we have $\mu_1(D_x H(x,\overline{u}))\le 0$, for every $x\in \real^{|\mathcal{L}|}$. As a result, we get $\|D_x\mathcal{T}(x)\|_1 \le 1$
and thus $\mathcal{T}$ is non-expansive with respect to $\ell_1$-norm. 
The \KM iteration for the operator $\mathcal{T}$ is given by
\begin{align*}
    x^{k+1} &= (1-\sqrt{\alpha}) x^k + \sqrt{\alpha}\mathcal{T}(x^k) \\ & = x^{k} + \alpha H(x^k,\overline{u}),\qquad\mbox{ for all }k\in \mathbb{Z}_{\ge 0}.
\end{align*}
Thus, the monotone-flow iteration~\eqref{eq:km} is the \KM iteration for the operator $\mathcal{T}$. By~\cite[Corollary 11]{JB-SR-IS:92}, either the monotone-flow iteration~\eqref{eq:km} grows unbounded or it converges to a fixed-point of the operator $\mathcal{T}$. One can use the bound~\eqref{eq:boundgood} and the fact that, every trajectory of the system~\eqref{eq:bound} converges to $x^e(\overline{u})$, to show that the monotone-flow iteration~\eqref{eq:km} is bounded and thus it converges to a fixed point of $\mathcal{T}$. 
Note that $x^*$ is a fixed points of $\mathcal{T}$ if and only if it is an equilibrium points of the system~\eqref{eq:bound}. Moreover, $H(x,\overline{u})$ is weakly contracting with respect to the $\ell_1$-norm and $x^e(\overline{u})$ is a locally asymptotically stable equilibrium point of~\eqref{eq:bound}. Therefore, by~\cite[Theorem 19]{SJ-PCV-FB:19q}, $x^e(\overline{u})$ is the unique equilibrium point of the system~\eqref{eq:bound}. As a result, $x^e(\overline{u})$ is the only fixed-point of the operator $\mathcal{T}$. Thus, the monotone-flow iteration~\eqref{eq:km} converges to $x^e(\overline{u})$.  

Regarding parts~\ref{p1.5} and~\ref{p2:eqpt-km}, consider the trajectory of the dynamical system~\eqref{eq:bound} starting at $\overline{x}$, i.e., $x_{\overline{x}}:\real_{\ge 0}\to \real^{|\mathcal{L}|}$. By~\cite[Theorem 19]{SJ-PCV-FB:19q}, one can show that $x_{\overline{x}}$ converges to $x^e(\overline{u})$. Therefore, we can define $T=\sup\{t\in\real_{\ge 0}\mid x_{\overline{x}}(t)\not\in \mathcal{M}\}$. For every $s>T$, the trajectory of the system~\eqref{eq:bound} starting at $x_{\overline{x}}(s)$ remains inside $\mathcal{M}$. Moreover, we have
\begin{align*}
H(x,u(t))\le H(x,\overline{u}),\qquad\mbox{ for every }x\in \real^{|\mathcal{L}|} .   
\end{align*}
Using~\cite[Lemma 3.8.1]{ANM-LH-DL:08-new}, for every $y\in \real^{|\mathcal{L}|}$, we have $z_y(t)\le x_y(t)$. Therefore, for every $s>T$, the trajectory of the system~\eqref{eq:original} starting at $x_{\overline{x}}(s)$ remains inside $\mathcal{M}$ and thus $x_{\overline{x}}(s)$ is a monotone-invariant point. The result then follows using a continuity argument and the bound~\eqref{eq:boundgood}. \end{proof}

In the next examples, we use our framework to estimate regions of attraction of free-flow equilibrium points for acyclic and cyclic flow networks.

    \begin{example}[An acyclic dynamic flow network]\label{ex:1}
     Consider the dynamic flow network~\eqref{eq:traffic} and~\eqref{eq:FIFO-traffic} with the topology shown in Figure~\ref{fig:example} and the demand and supply functions as below:
     \begin{align*}
         d_i(x_i) &= \min\{15,x_i\},&&\quad i\in\{1,4,5\}\\ s_i(x_i) &= \min\{15,30-x_i\},&&\quad i\in\{1,4,5\},\\
         d_i(x_i) &= \min\{50,x_i\},&&\quad i\in\{2,3\},\\ s_i(x_i) &= \min\{50,100-x_i\},&&\quad i\in\{2,3\}
     \end{align*}
     We assume that all the non-zero split ratios are equal to $1$ except  $R^{\sigma(1)}_2=R^{\sigma(1)}_3=\frac{1}{2}$ and we have a constant input metering $u=8$.
          \begin{figure}
              \centering
              \includegraphics[width=0.9\linewidth]{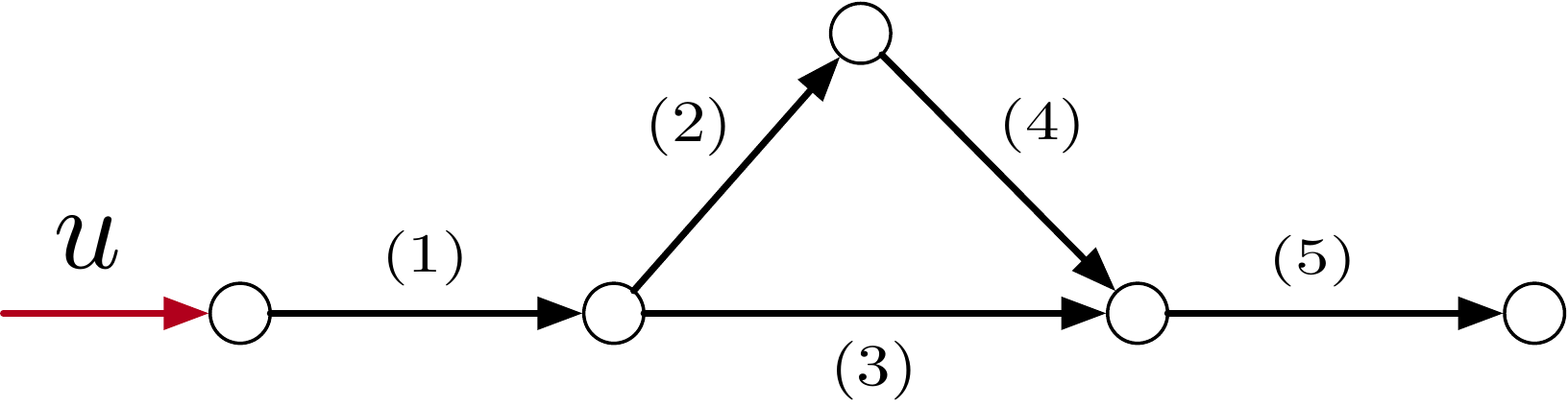}
              \caption{Topology of the flow network in Example~\ref{ex:1} with $\mathcal{R}=\{1\}$ and $\mathcal{O}=\{2,3,4,5\}$.}
              \label{fig:example}
            \end{figure}
        It is easy to compute the free-flow equilibrium point $x^e(u)=(8,4,4,4,8)^{\top}$. Note that $ \subscr{\mathcal{U}}{str}=[0,15)$. Therefore, by Theorem~\ref{thm:equilibrium}\ref{p4:free-flow} and Proposition~\ref{prop:invariant2}\ref{p2:freefloweqpt}, for every $\overline{u}\in [0,15)$, the box $[\vect{0}_5,x^e(\overline{u})]$ is in the region of attraction of  $x^e(u)=(8,4,4,4,8)^{\top}$. For $\overline{u}=15$, one can compute $x^e(15)=(15, 7.5, 7.5, 7.5, 15)^{\top}$. Thus, every trajectory of the system starting in the box $[\vect{0}_5, (15, 7.5, 7.5, 7.5, 15)^{\top}]$ converges to the free-flow equilibrium point $(8,4,4,4,8)^{\top}$. Note that, $\overline{u}=15$ is not a strictly feasible input metering. One can check that
          \begin{align*}
              x^* = (15, 92.5, 22.5, 22.5, 15)^{\top}
          \end{align*}
           is another equilibrium point (different from $x^e(15)$ and satisfying $x^e(15)\le x^*$) of the vector field $F(x,15)$ in the monotone-flow domain $\mathcal{M}$. Using Theorem~\ref{thm:equilibrium}\ref{p4:free-flow} and Proposition~\ref{prop:invariant2}\ref{p3:generaleqpt}, every trajectory starting in the box $[\vect{0}_5, (15, 92.5, 22.5, 22.5, 15)^{\top}]$ converges to $(8,4,4,4,8)^{\top}$. One can also use the monotone-flow iteration~\eqref{eq:km} to obtain a monotone-invariant point of the system. By setting $\alpha=0.01$ and $\overline{u}=14$, we obtain $N=108246$ and $x^{N+1}=(14.9,\;\;   92.55,\;\;  22.15,\;\;   22.45,\;\;  15)^{\top}$. Therefore, by Proposition~\ref{prop:iterations}, the point $(14.9,\;\;   92.55,\;\;  22.15,\;\;   22.45,\;\;  15)^{\top}$ is a monotone-invariant point of the system and, by Theorem~\ref{thm:equilibrium}\ref{p4:free-flow}, the box $[\vect{0}_5,(14.9,\;\;   92.55,\;\;   22.15,\;\;   22.45,\;\;  15)^{\top}]$ is in the region of attraction of the equilibrium point $(8,4,4,4,8)^{\top}$.
    \end{example}

\begin{example}[A cyclic dynamic flow network]\label{ex:2}
Consider the dynamic flow network~\eqref{eq:traffic} and~\eqref{eq:FIFO-traffic} with the topology shown in Figure~\ref{fig:example2} and the demand and supply functions as below:
     \begin{align*}
         d_i(x_i) &= \min\{15,x_i\},&&\quad i\in\{1,2,3,4\}\\ s_i(x_i) &= \min\{15,30-x_i\},&&\quad i\in\{1,2,3,4\}
     \end{align*}
     We assume that all the non-zero split ratios are equal to $1$ except  $R^{\sigma(3)}_2=R^{\sigma(3)}_4=\frac{1}{2}$ and we have a constant input metering $u=5$.
     \begin{figure}
              \centering
              \includegraphics[width=0.9\linewidth]{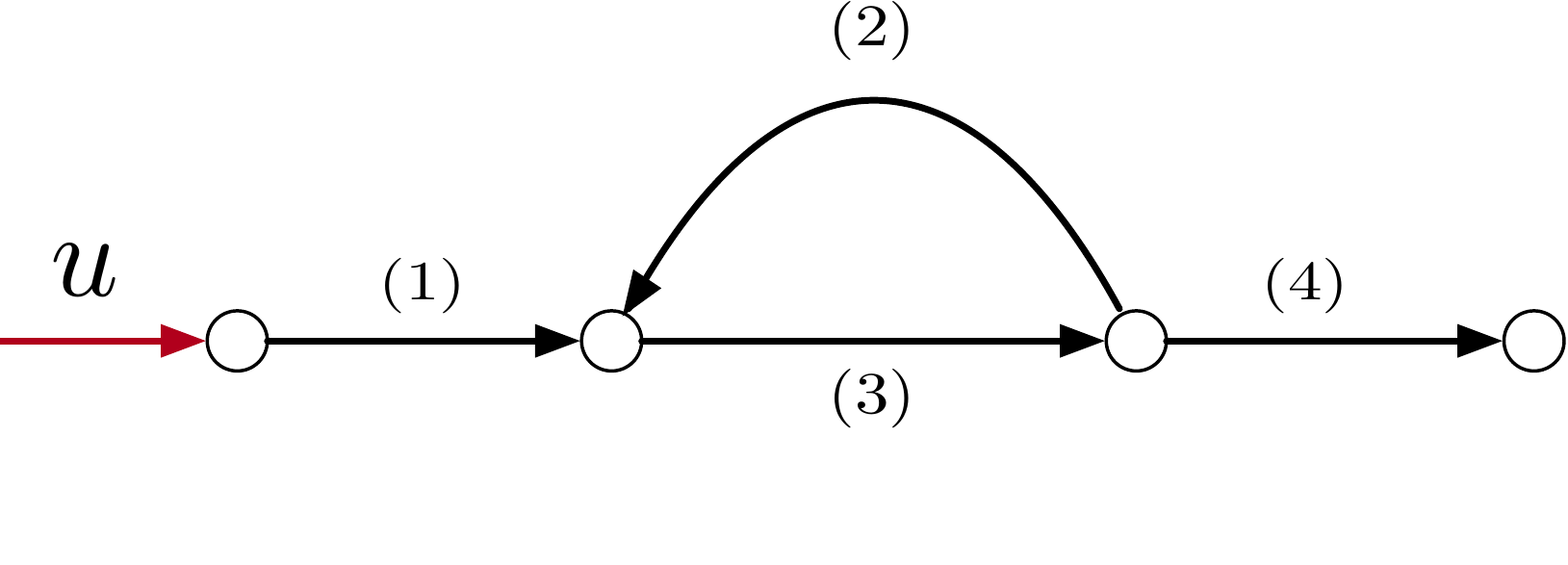}
              \caption{Topology of the flow network in Example~\ref{ex:2} with $\mathcal{R}=\{1\}$ and $\mathcal{O}=\{2,3,4\}$.}
              \label{fig:example2}
            \end{figure}
Note that $ \subscr{\mathcal{U}}{str}=[0,7.5)$ and one can check that the free-flow equilibrium point of the system is given by $x^e(u)=(5, 10, 5, 5)^{\top}$. Therefore, by Theorem~\ref{thm:equilibrium}\ref{p4:free-flow} and Proposition~\ref{prop:invariant2}\ref{p2:freefloweqpt}, for every $\overline{u}\in [0,7.5)$, the box $[\vect{0}_4,x^e(\overline{u})]$ is in the region of attraction of $(5, 10, 5, 5)^{\top}$. For $\overline{u}=7.5$, one can compute $x^e(7.5)=(7.5, 15, 7.5, 7.5)^{\top}$. A a result, every trajectory of the system starting in $[\vect{0}_4,(7.5, 15, 7.5, 7.5)^{\top}]$ converges to $(5, 10, 5, 5)^{\top}$. One can also use the monotone-flow iteration~\eqref{eq:km} to obtain a monotone-invariant point of the system. By setting $\alpha=0.01$ and $\overline{u}=7$, we obtain $N=2429$ and $x^{N+1} = (20.97,\;\; 15,\;\; 22.5,\;\; 7.5)^{\top}$. Therefore, by Proposition~\ref{prop:iterations}, the point $(20.97,\;\; 15,\;\; 22.5,\;\; 7.5)^{\top}$ is a monotone-invariant point of the system and, by Theorem~\ref{thm:equilibrium}\ref{p4:free-flow}, the box $[\vect{0}_4,(20.97,\;\; 15,\;\; 22.5,\;\; 7.5)^{\top}]$ is in the region of attraction of the equilibrium point $(5, 10, 5, 5)^{\top}$.
\end{example}

\section{CONCLUSIONS}
We study robustness of input metering with respect to transient disturbances in flow networks with FIFO rules. We use the notion of a monotone-invariant point to push the boundaries of applicability of monotone system theory in flow networks with FIFO rules. For this class of flow networks, we establish a framework for estimating regions of attraction of equilibrium points and periodic orbits.





\section*{APPENDIX}

\subsection{Two useful lemmas}
In this appendix, we present two lemmas which are crucial for the proof of the main results of this paper.

\begin{lemma}[Monotonicity of free-flow equilibrium points]\label{lem:monotone-eqpt}
Consider the dynamical flow network~\eqref{eq:traffic} and~\eqref{eq:FIFO-traffic}. Given $u,w\in \mathcal{U}$,
\begin{enumerate}
    \item\label{p1:mon} if $u\le w$, then $x^e(u)\le x^e(w)$;
    \item\label{p2:mon} if $u<w$, then $x^e(u)<x^e(w)$.
\end{enumerate}
\end{lemma}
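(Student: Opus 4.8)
The plan is to push both comparisons through the linear map $u\mapsto f^e(u)$ and then apply monotonicity of the inverse demand functions. Since $x^e_i(u)=d_i^{-1}(f^e_i(u))$ by \eqref{eq:eqpt-formula} and each $d_i^{-1}$ is strictly increasing (because $d_i$ is strictly increasing by property~\ref{p1}), it suffices to show that $u\le w$ implies $f^e(u)\le f^e(w)$ and that $u<w$ implies $f^e(u)<f^e(w)$, componentwise over all of $\mathcal{L}$. By \eqref{eq:flow-formula} the map $f^e$ is linear: it is the identity on the $\mathcal{R}$-components and multiplication by $P=(I-R_{\mathcal{O}})^{-1}R_{\mathcal{R}}$ on the $\mathcal{O}$-components. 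Part~\ref{p1:mon} is then immediate: for $i\in\mathcal{R}$, $f^e_i(w)-f^e_i(u)=w_i-u_i\ge 0$, while $f^e_{\mathcal{O}}(w)-f^e_{\mathcal{O}}(u)=P(w-u)\ge 0$ since $P\ge 0$ and $w-u\ge 0$; applying $d_i^{-1}$ componentwise gives $x^e(u)\le x^e(w)$.

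For part~\ref{p2:mon}, write $\epsilon=w-u$, so $\epsilon$ has all components strictly positive, and set $\delta=f^e(w)-f^e(u)=f^e(\epsilon)$ by linearity; then $\delta_j=\epsilon_j>0$ for $j\in\mathcal{R}$ and $\delta_{\mathcal{O}}=P\epsilon\ge 0$, and it remains to upgrade $\delta_{\mathcal{O}}\ge 0$ to $\delta_{\mathcal{O}}>0$. The tool is the equilibrium flow-balance identity: unwinding the definitions of $R_{\mathcal{O}}$ and $R_{\mathcal{R}}$ in $(I-R_{\mathcal{O}})f^e_{\mathcal{O}}(u)=R_{\mathcal{R}}u$ yields $f^e_i(u)=R^{\tau(i)}_i\sum_{j\in\supscr{\mathcal{L}}{in}_{\tau(i)}}f^e_j(u)$ for every $i\in\mathcal{O}$ — the same relation already exploited in the proof of Theorem~\ref{thm:equilibrium}\ref{p1:cyclic} — and subtracting the $u$- and $w$-versions gives $\delta_i=R^{\tau(i)}_i\sum_{j\in\supscr{\mathcal{L}}{in}_{\tau(i)}}\delta_j$ for every $i\in\mathcal{O}$, with $R^{\tau(i)}_i>0$ because $i\in\supscr{\mathcal{L}}{out}_{\tau(i)}$.

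I would then finish with a backward-propagation-of-zeros argument. Let $Z=\setdef{i\in\mathcal{L}}{\delta_i=0}$; note $Z\cap\mathcal{R}=\emptyset$ since $\delta_j=\epsilon_j>0$ there. If $i\in Z\cap\mathcal{O}$, then since $R^{\tau(i)}_i>0$ and all $\delta_j$ are nonnegative, the balance identity forces $\delta_j=0$, i.e. $j\in Z$, for every $j\in\supscr{\mathcal{L}}{in}_{\tau(i)}$. Suppose $Z\ne\emptyset$ and pick $i_0\in Z$ with $v:=\tau(i_0)$. By the second part of Assumption~\ref{asu:invertible} there is a directed path from some node of $V^{\mathrm{in}}$ to $v$; since a node of $V^{\mathrm{in}}$ has an incident entry link, this extends to a directed path of links $r,l_1,\dots,l_k$ with $r\in\mathcal{R}$, $\sigma(l_k)=v$, and all $l_m\in\mathcal{O}$. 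Starting from $i_0\in Z\cap\mathcal{O}$ and applying the propagation step repeatedly — first at $v=\tau(i_0)$ to get $l_k\in Z\cap\mathcal{O}$, then at $\tau(l_k)=\sigma(l_{k-1})$ to get $l_{k-1}\in Z\cap\mathcal{O}$, and so on down to $l_1$, and finally at $\tau(l_1)$ — we conclude $r\in Z$, contradicting $\delta_r=\epsilon_r>0$. Hence $Z=\emptyset$, so $\delta>0$, and applying the strictly increasing $d_i^{-1}$ componentwise gives $x^e(u)<x^e(w)$.

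The routine parts are part~\ref{p1:mon} and the linear-algebra bookkeeping; the only genuine content is the propagation-of-zeros argument, whose essential ingredient is the second part of Assumption~\ref{asu:invertible} (reachability from an entry link), which is exactly what forbids a nontrivial ``dead'' link set. A secondary point requiring care is the verification of the flow-balance identity with the full index set $\supscr{\mathcal{L}}{in}_{\tau(i)}$ (which mixes $\mathcal{O}$- and $\mathcal{R}$-links), but this is just an expansion of the definitions of $R_{\mathcal{O}}$, $R_{\mathcal{R}}$, and $f^e$.
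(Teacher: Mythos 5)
Your proof is correct and follows essentially the same route as the paper's: part~\ref{p1:mon} via $P\ge 0$ and strict monotonicity of $d_i^{-1}$, and part~\ref{p2:mon} via backward propagation of an equality along a directed path from a node of $V^{\mathrm{in}}$, which is exactly what Assumption~\ref{asu:invertible} supplies. The only cosmetic difference is that you propagate zeros of the flow increment $\delta=f^e(w)-f^e(u)$ using the linear identity $(I-R_{\mathcal{O}})f^e_{\mathcal{O}}(u)=R_{\mathcal{R}}u$, whereas the paper propagates equalities of densities using $x^e(w)\in\mathcal{F}$ and the equilibrium balance $F(x^e(w),w)=\vect{0}_{|\mathcal{L}|}$; these are the same computation in different variables.
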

\begin{proof}
Regarding part~\ref{p1:mon}, for $i\in \mathcal{R}$, we have $x^e_i(u)=d_i^{-1}(u_i)$. Since $d_i$ is strictly increasing and $u_i\le w_i$, we get that
\begin{align*}
  x^e_i(u) =  d_i^{-1}(u_i) \le  d_i^{-1}(w_i) = x^e_i(w). 
\end{align*}
For $i\in \mathcal{R}$, we have $x^e_{i}(u) = d_i^{-1}([Pu]_{i})$. Since $P\ge 0$ and $u\le w$, we have $Pu\le Pw$. Since $d_i$ is strictly increasing, we get $x^e_{i}(u) = d_i^{-1}([Pu]_{i}) \le  d_i^{-1}([Pw]_{i}) =x^e_{i}(w)$.
Regarding part~\ref{p2:mon}, we know that $x^e(u)\le x^e(w)$. Suppose that there exists $i\in \mathcal{L}$ such that $x^e_i(u)=x^e_i(w)$. Since $x^e(w)\in \mathcal{F}$, we have 
\begin{align*}
    \supscr{f}{out}_i(x^e(u)) = d_i(x^e_i(u)) = d_i(x^e_i(w))= \supscr{f}{out}_i(x^e(w)).
\end{align*}
Note that $x^e(w)$ satisfies $F(x^e(w),w)=\vect{0}_{|\mathcal{L}|}$. This implies that $\supscr{f}{in}_i(x^e(u),u) = \supscr{f}{in}_i(x^e(w),w)$. We now consider two cases. Suppose that $i\in \mathcal{R}$. In this case,
\begin{align*}
    u_i = \supscr{f}{in}_i(x^e(u),u) = \supscr{f}{in}_i(x^e(w),w) = w_i,
\end{align*}
which is a contradiction since we assumed that $u<w$. Now suppose that $i\in \mathcal{O}$. By Assumption~\ref{asu:invertible}, there exists a directed path from some node in $\supscr{V}{in}$ to $\sigma(i)$. Let $(i_0, i_1,\ldots,i_k=i)$ be the ordered set of links on this directed path. This means that, for every $r\in \{0,\ldots,k-1\}$, we have $i_{r}\in \supscr{\mathcal{L}}{in}_{\tau(i_{r+1})}$. For $v=\tau(i)$, we have
\begin{align}\label{eq:saberisverysmart}
    R^v_{i}\sum_{j\in \supscr{\mathcal{L}}{in}_v}d_j(x^e_j(u)) & = \supscr{f}{in}_i(x^e(u),u) = \supscr{f}{in}_i(x^e(w),w) \nonumber\\ & = R^v_i\sum_{j\in \supscr{\mathcal{L}}{in}_v}d_j(x^e_j(w)),
\end{align}
Note that $x^e(u)\le x^e(w)$ and the demand function $d_i$ is strictly increasing on $[0,\supscr{x}{crit}_i]$, for every $i\in \mathcal{L}$. This implies that $d_j(x^e(u))\le d_j(x^e(w))$, for every $j\in \mathcal{L}$. Combining this fact with equality~\eqref{eq:saberisverysmart} will lead to  
\begin{align*}
    d_j(x^e_j(u)) = d_j(x^e_j(w)),\quad\mbox{ for }j\in \supscr{\mathcal{L}}{in}_v,\; v=\tau(i).
\end{align*}
Using the fact that $i_{k-1}\in \supscr{\mathcal{L}}{in}_{\tau(i_k)}$, we get $x^e_{i_{k-1}}(u) = x^e_{i_{k-1}}(w)$. We can continue this procedure until we get to $x^e_{i_{0}}(u) = x^e_{i_{0}}(w)$. Since $i_0\in\mathcal{R}$ we get a contradiction by the previous case. This means that we have $x^e(u)<x^e(w)$.
\end{proof}

\begin{lemma}[Invariant sets]\label{prop:invariant}
      Consider the dynamical flow network~\eqref{eq:traffic} and~\eqref{eq:FIFO-traffic} with a feasible input metering $u:\real_{\ge 0}\to \mathcal{U}$. Suppose that $u$ satisfies Assumption~\ref{assum:good} with the upper-bound $\overline{u}$ and $y\in \mathcal{M}$ is such that $F(y,\overline{u})\le \vect{0}_{|\mathcal{L}|}$. Then the box $[\vect{0}_{|\mathcal{L}|},y]$ is an invariant set of the system.
      \end{lemma}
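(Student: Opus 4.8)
The plan is to establish positive invariance of the box $K:=[\vect{0}_{|\mathcal{L}|},y]$ by verifying the Nagumo subtangentiality condition on its boundary. The first step is to record that $K\subseteq\mathcal{M}$, which is immediate from Proposition~\ref{thm:free-flow-property}\ref{p1:cone} since $y\in\mathcal{M}$. Consequently, on $K$ the vector field inherits the two structural properties that drive the argument: it satisfies the Kamke--M\"uller quasimonotone condition, because the system is monotone on $\mathcal{M}$ by Proposition~\ref{thm:free-flow-property}\ref{p2:weak-contraction}, and it is nondecreasing in the input, i.e. $F(x,u(t))\le F(x,\overline{u})$ componentwise for all $x$ and all $t\ge0$; the latter holds because $u(t)\le\overline{u}$ (Assumption~\ref{assum:good}) and the only direct input dependence of $F$ is through the entry-link terms $\min\{u_i,s_i(x_i)\}$, which are nondecreasing in $u_i$, while $\supscr{f}{out}$ and the $\mathcal{O}$-inflows do not depend on $u$ at all.

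Next I would verify the two families of face conditions. On a lower face, where $x_i=0$, property~\ref{p1} of the demand functions gives $d_i(0)=0$, hence $\supscr{f}{out}_i(x)=\alpha^{\sigma(i)}(x)\,d_i(0)=0$, while $\supscr{f}{in}_i(x,u(t))\ge 0$ (it is a minimum of nonnegative quantities when $i\in\mathcal{R}$, and a nonnegative combination of nonnegative outflows when $i\in\mathcal{O}$); therefore $F_i(x,u(t))=\supscr{f}{in}_i(x,u(t))\ge 0$. On an upper face, where $x\in K$ with $x_i=y_i$, I would chain the facts above: $F_i(x,u(t))\le F_i(x,\overline{u})\le F_i(y,\overline{u})\le 0$, where the first inequality is input-monotonicity, the second is quasimonotonicity applied to $x\le y$ with $x_i=y_i$ (both points lying in $\mathcal{M}$), and the third is the hypothesis $F(y,\overline{u})\le\vect{0}_{|\mathcal{L}|}$. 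Since the tangent cone to $K$ at a boundary point $x$ is $\{v\in\real^{|\mathcal{L}|} : v_i\ge 0 \text{ whenever } x_i=0,\ v_i\le 0 \text{ whenever } x_i=y_i\}$, these two families of inequalities say precisely that $F(x,u(t))$ points into $K$ at every boundary point, for every $t$, including at corners.

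Finally, since the demand and supply functions are Lipschitz, $F(\cdot,u(t))$ is Lipschitz uniformly in $t$, so solutions are well defined and unique, and Nagumo's invariance theorem (valid for Lipschitz, not merely $C^1$, vector fields) yields that $K$ is forward invariant. The step requiring the most care is the upper-face inequality: it rests on the quasimonotonicity of $F$, which is only available on $\mathcal{M}$, so the preliminary inclusion $K\subseteq\mathcal{M}$ is essential, and one should note that the relevant boundary faces of $K$ also lie in $\mathcal{M}$ (being subsets of $K$). An alternative, essentially equivalent route avoiding an explicit appeal to Nagumo is a comparison argument: by~\cite[Lemma 3.8.1]{ANM-LH-DL:08-new} and $F(x,u(t))\le F(x,\overline{u})$, every trajectory of $\dot{x}=F(x,u(t))$ is dominated by the trajectory of the autonomous system $\dot{z}=F(z,\overline{u})$ with the same initial condition, and for that autonomous system the condition $F(y,\overline{u})\le\vect{0}_{|\mathcal{L}|}$ together with monotonicity forces the trajectory started at $y$ to be nonincreasing; combined with positivity this keeps it in $[\vect{0}_{|\mathcal{L}|},y]$. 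I would nonetheless favor the direct boundary argument, since it avoids having to first bootstrap the autonomous trajectory into $\mathcal{M}$.
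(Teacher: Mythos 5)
Your proof is correct, but your primary argument takes a genuinely different route from the paper's. You verify Nagumo's subtangentiality condition face by face on $[\vect{0}_{|\mathcal{L}|},y]$: the lower faces via $d_i(0)=0$ and nonnegativity of the inflows, and the upper faces via the chain $F_i(x,u(t))\le F_i(x,\overline{u})\le F_i(y,\overline{u})\le 0$, which rests on input-monotonicity, the Kamke--M\"uller condition (available only because $[\vect{0}_{|\mathcal{L}|},y]\subseteq\mathcal{M}$ by Proposition~\ref{thm:free-flow-property}\ref{p1:cone}), and the hypothesis $F(y,\overline{u})\le\vect{0}_{|\mathcal{L}|}$. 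The paper instead runs exactly the comparison argument you list as your alternative: it dominates the time-varying trajectory from $y$ by the trajectory of the autonomous system $\dot z=F(z,\overline{u})$ from $y$, which is non-increasing by \cite[Proposition 2.1]{HLS:95}, and concludes via \cite[Lemma 3.8.1]{ANM-LH-DL:08-new}. Your boundary argument buys two things: it treats every initial condition in the box at once (the paper's proof, as written, only tracks the trajectory started at $y$ and leaves the extension to arbitrary points of the box to order preservation), and it makes the lower bound $x(t)\ge\vect{0}_{|\mathcal{L}|}$ explicit rather than implicit in positivity of the dynamics. The only step needing care, which you already flag, is the Kamke inequality $F_i(x,\overline{u})\le F_i(y,\overline{u})$ for $x\le y$ with $x_i=y_i$: since the box is convex and contained in $\mathcal{M}$, one integrates the (a.e.\ defined) Metzler off-diagonal entries of $D_xF$ along the segment from $x$ to $y$, which is legitimate even though $F$ is only piecewise smooth and Lipschitz.
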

      \begin{proof}
Consider the dynamical systems:
 \begin{align}
     \dot{x} &= F(x,u(t)),\qquad x\in \real^{|\mathcal{L}|},\label{eq:ds1-lemma}\\
     \dot{z} &= F(z,\overline{u}),\quad\qquad x\in \real^{|\mathcal{L}|}\label{eq:ds2-lemma}
 \end{align}
 where $F$ is as defined in~\eqref{eq:traffic} and~\eqref{eq:FIFO-traffic}. Let $x_y:\real_{\ge 0}\to \real^{|\mathcal{L}|}$ be the trajectory of the dynamical system~\eqref{eq:ds1-lemma} starting at $y$ and $z_y:\real_{\ge 0}\to \real^{|\mathcal{L}|}$ be the trajectory of the dynamical system~\eqref{eq:ds2-lemma} starting at $y$. Since $y\in \mathcal{M}$, by Lemma~\ref{thm:free-flow-property}, the box $[\vect{0}_{|\mathcal{L}|},y]$ is inside $\mathcal{M}$ and both dynamical systems~\eqref{eq:ds1-lemma} and~\eqref{eq:ds2-lemma} are monotone on this box. Note that $F(y,\overline{u})\le \vect{0}_{|\mathcal{L}|}$. Therefore, using~\cite[Proposition 2.1]{HLS:95}, the trajectory $t\mapsto z_y(t)$ is non-increasing in $t$ and converges to an equilibrium point in $[\vect{0}_{|\mathcal{L}|},y]$. Using the definition of the vector field $F$ in~\eqref{eq:traffic} and~\eqref{eq:FIFO-traffic}, 
 \begin{align}\label{eq:ineq-F}
  F(x,u) \le F(x,\overline{u}), \qquad\mbox{ for every }x\in \real^{|\mathcal{L}|}.  
 \end{align}
  In turn,~\cite[Lemma 3.8.1]{ANM-LH-DL:08-new} and inequality~\eqref{eq:ineq-F} imply that, for every $t\in \real_{\ge 0}$, we have $x_{y}(t) \le z_y(t)$. This implies that $x_y(t)\in [\vect{0}_{|\mathcal{L}|},y]$, for every $t\in \real_{\ge 0}$.
 \end{proof}

\subsection{Proof of Proposition~\ref{thm:free-flow-property}}\label{app:monotone-flow}
  Regarding part~\ref{p1:cone}, note the definition of domain $\mathcal{F}$ in Definition~\eqref{def:monotoneflow} and the
  FIFO rule~\eqref{eq:FIFO-traffic} together implies that
  $\alpha^{v}(x)=1$, for every $v\in \supscr{V}{div}$. Note that $s_i$
  is decreasing and $d_i$ is increasing and $R^v_i$ are constant, for every $i\in \supscr{\mathcal{L}}{out}_v$. This implies that, if $y\le x$ then we have
  $\alpha^{v}(y)\le \alpha^{v}(x)$. By definition of
  $\alpha^{v}(y)$, this implies
  that $\alpha^{v}(y)=1$, for every $v\in \supscr{V}{div}$. As a result, using the FIFO
  rule~\eqref{eq:FIFO-traffic}, one can easily see that
  $\supscr{f}{out}_i(y)=d_i(y_i)$ for every $i\in \supscr{\mathcal{L}}{out}_v$ with $v\in \supscr{V}{div}$. This
  means that $y\in \mathcal{M}$. Regarding part~\ref{p2:weak-contraction}, the monotonicity of the dynamical~\eqref{eq:traffic} with the FIFO
  rule~\eqref{eq:FIFO-traffic} on $\mathcal{F}$ is clear since the terms $\supscr{f}{out}_i$ for $i\in \supscr{\mathcal{L}}{out}_v$ with $v\in \supscr{V}{div}$ were the only terms that cause non-monotonicity of the system and we show in part~\ref{p1:cone} that $\supscr{f}{out}_i(y)=d_i(y_i)$ for every $i\in \supscr{\mathcal{L}}{out}_v$ with $v\in \supscr{V}{div}$. For every $x\in \mathcal{M}$, we compute
  \begin{align*}
      \sum_{i=1}^{n} F_i(x,u) \le \sum_{i\in \mathcal{R}} s_i(x_i) - \sum_{v\in \supscr{V}{out}}\big(1-\sum_{i\in \supscr{\mathcal{L}}{out}_v} R^v_i\big)d_i(x_i).
  \end{align*}
  Since $x_i\mapsto s_i(x_i)$ is strictly decreasing and $x_i\mapsto d_i(x_i)$ is strictly increasing, for almost every $x\in \mathcal{M}$, 
   \begin{align*}
      \mu_1(D_x F(x,u)) = \max_j\frac{\partial }{\partial x_j}\sum_{i=1}^{n} F_i(x,u) \le 0
  \end{align*}
  where the first equality holds because $x\mapsto F(x,u)$ is monotone. This implies that $F$ is weakly contracting on $\mathcal{M}$. 
  
\subsection{Proof of Proposition~\ref{thm:ext}}\label{sec:ext}
  Regarding part~\ref{p1:H}, the monotonicity of the vector field $H$ follows from the following fact: the states $x$ for which $\supscr{f}{out}_i(x)\ne d_i(x_i)$ for $i\in \supscr{\mathcal{L}}{out}_v$ with $v\in \supscr{V}{div}$ are the only sources of non-monotonicity in the vector field $F$. However, for the vector field $H$, we have $\supscr{h}{out}_i(y)=d_i(y_i)$ for every $i\in \supscr{\mathcal{L}}{out}_v$ with $v\in \supscr{V}{div}$. To show that the system is weakly contracting with respect to $\ell_1$-norm, note that we have 
  \begin{align*}
      \sum_{i=1}^{n} H_i(x,u) \le \sum_{i\in \mathcal{R}} s_i(x_i) - \sum_{v\in \supscr{V}{out}}\big(1-\sum_{i\in \supscr{\mathcal{L}}{out}_v} R^v_i\big)d_i(x_i).
  \end{align*}
  Since $x_i\mapsto s_i(x_i)$ is strictly decreasing and $x_i\mapsto d_i(x_i)$ is strictly increasing, for almost every $x\in \real^{|\mathcal{L}|}$, 
   \begin{align*}
      \mu_1(D_x H(x,u)) =  \max_{j}\frac{\partial}{\partial x_j}\sum_{i=1}^{n}H_i(x,u) \le 0
  \end{align*}
  where the first equality holds because $x\mapsto H(x,u)$ is monotone.
  Regarding part~\ref{p2:H}, note that, for every $x\in \mathcal{M}$, we have $\supscr{f}{out}_i(x)=d_i(x_i)=\supscr{h}{out}_i(x)$, for every $i\in \supscr{\mathcal{L}}{out}_v$ with $v\in \supscr{V}{div}$. Thus, by comparing definition of $F$ in equations~\eqref{eq:traffic} and~\eqref{eq:FIFO-traffic} and definition of $H$ in~\eqref{eq:H}, it is clear that $F(x,u)=H(x,u)$, for every $x\in \mathcal{M}$.


\bibliographystyle{ieeetr}
\bibliography{alias,Main,FB,SJ}

\end{document}